\definecolor{yqyqyq}{rgb}{0.5019607843137255,0.5019607843137255,0.5019607843137255}\definecolor{uuuuuu}{rgb}{0.26666666666666666,0.26666666666666666,0.26666666666666666}
\definecolor{uququq}{rgb}{0.25098039215686274,0.25098039215686274,0.25098039215686274}
\definecolor{wwwwww}{rgb}{0.4,0.4,0.4}
\definecolor{uuuuuu}{rgb}{0.26666666666666666,0.26666666666666666,0.26666666666666666}
\setlist[itemize]{leftmargin=6mm}
\renewcommand{\P}{\mathbb P}
\newcommand{\ZZ}{\mathbb Z}
\newcommand{\GG}{\mathbb G}
\DeclareMathOperator{\codim}{codim}
\DeclareMathOperator{\Cl}{Cl}
\DeclareMathOperator{\NE}{NE}
\DeclareMathOperator{\lin}{lin}
\DeclareMathOperator{\mult}{mult}
\DeclareMathOperator{\Hom}{Hom}
\DeclareMathOperator{\Exc}{Exc}
\DeclareMathOperator{\Sing}{Sing}
\DeclareMathOperator{\MCD}{MCD}
\DeclareMathOperator{\SBLD}{SBLD}
\DeclareMathOperator{\Eff}{Eff}
\DeclareMathOperator{\Nef}{Nef}
\DeclareMathOperator{\Mov}{Mov}
\DeclareMathOperator{\Pic}{Pic}
\DeclareMathOperator{\rank}{rank}
\renewcommand{\sec}{\mathbb{S}ec}
\DeclareMathOperator{\Sym}{Sym}
\DeclareMathOperator{\Cox}{Cox}
\DeclareMathOperator{\mov}{mov}
\renewcommand{\P}{\mathbb{P}}
\newtheorem{thm}{Theorem}[section]
\newtheorem{Question}[thm]{Question}
\newtheorem{Lemma}[thm]{Lemma}
\newtheorem{Proposition}[thm]{Proposition}
\newtheorem{Corollary}[thm]{Corollary}
\theoremstyle{definition}
\newtheorem{Definition}[thm]{Definition}
\newtheorem{Remark}[thm]{Remark}
\newtheorem{Notation}[thm]{Notation}
\newtheorem{Construction}[thm]{Construction}
\newtheorem*{thm1}{Theorem A}
\newtheorem*{thm2}{Theorem B}
\begin{document}
\title{The Lefschetz principle in birational geometry: birational twin varieties}

\author[C\'esar Lozano Huerta]{C\'esar Lozano Huerta}
\address{\sc C\'esar Lozano Huerta\\
Universidad Nacional Aut\'onoma de M\'exico, Instituto de Matem\'aticas\\
68000, Leon 2, Centro, Oaxaca, M\'exico}
\email{lozano@im.unam.mx}

\author[Alex Massarenti]{Alex Massarenti}
\address{\sc Alex Massarenti\\ Dipartimento di Matematica e Informatica, Universit\`a di Ferrara, Via Machiavelli 30, 44121 Ferrara, Italy\newline
\indent Instituto de Matem\'atica e Estat\'istica, Universidade Federal Fluminense, Campus Gragoat\'a, Rua Alexandre Moura 8 - S\~ao Domingos\\
24210-200 Niter\'oi, Rio de Janeiro\\ Brazil}
\email{alex.massarenti@unife.it, alexmassarenti@id.uff.br}

\date{\today}
\subjclass[2010]{Primary 14E30; Secondary 14J45, 14N05, 14E07, 14M27}
\keywords{Mori dream spaces; Cox rings; Spherical varieties; Complete collineations and quadrics}

\begin{abstract} Inspired by the Weak Lefschetz Principle, we study when a smooth projective variety fully determines the birational geometry of some of its subvarieties. In particular, we consider the natural embedding of the space of complete quadrics into the space of complete collineations and we observe that their birational geometry, from the point of view of Mori theory, fully determines each other. When two varieties are related in this way, we call them birational twins. We explore this notion and its various flavors for other embeddings between Mori dream spaces.
\end{abstract}

\maketitle 

\setcounter{tocdepth}{1}

\tableofcontents

\section{Introduction}
\noindent  The celebrated Lefschetz hyperplane theorem asserts that if there is a smooth complex projective variety $Y$ with the inclusion of a smooth ample hypersurface $i:X\rightarrow Y$, then the ambient variety $Y$ often fully determines many of the topological invariants of $X$. Such invariants include homology, cohomology (compatible with Hodge structures) and homotopy groups among others. This phenomenon leads to the question: what are some algebro-geometric invariants to which a result of this type applies? In other words, we are asking if the Weak Lefschetz Principle holds beyond topological invariants.

The purpose of this paper is to investigate the Weak Lefschetz Principle in the context of birational geometry. In other words, if we consider two varieties $X,Y$ as well as an embedding between them $i: X\rightarrow Y$, then we ask about the birational invariants $X$ which are determined by those of $Y$. Such invariants may include effective and ample cones, or finer information as we now explain.

Let us recall that a normal $\mathbb{Q}$-factorial projective variety $X$ with finitely generated Picard group is a Mori dream space if and only if its Cox ring is finitely generated \cite[Proposition 2.9]{HK00}. The birational geometry of these varieties is encoded in this ring and we may ask whether the Weak Lefschetz Principle applies to it. This question has recently been studied in the case of hypersurfaces \cite{HLW01,JOW,Ottem,Szen}, and for Mori dream spaces of Picard rank two \cite{LMR18}. In addition to considering the Cox ring, let us recall that the pseudo-effective cone $\overline{\Eff}(X)$ of a projective variety $X$ with irregularity zero, such as a Mori dream space, can be decomposed into chambers depending on the stable base locus of the corresponding linear series. This decomposition is called \textit{stable base locus decomposition} and in general it is coarser than the so-called \textit{Mori chamber decomposition}; which encodes the isomorphism type of the birational models of $X$. Mori dream spaces and these decompositions have been widely studied by many authors recently, for instance see \cite{CT06, Mu01, AM16, MR18} and the references therein.   

We may also ask whether the previous two decompositions satisfy the Weak Lefschetz Principle. We then summarize the previous discussion in the following definition. Let us denote by $\SBLD(X)$ the stable base locus decomposition of $\overline{\Eff}(X)$.

\begin{Definition}\label{Pre}
Let $X,Y$ be $\mathbb{Q}$-factorial projective varieties and $i:X\rightarrow Y$ be an embedding. These varieties are said \textit{Lefschetz divisorially equivalent} if the pull-back $i^*:\Pic(Y)\rightarrow\Pic(X)$ induces an isomorphism such that
$$i^*\Eff(Y) = \Eff(X),\quad  i^*\Mov(Y) = \Mov(X),\quad  i^*\Nef(Y) = \Nef(X)$$
Furthermore, $X$ and $Y$ are said \textit{strongly Lefschetz divisorially equivalent} if they are Lefschetz  divisorially equivalent and in addition $i^{*}\SBLD(Y) = \SBLD(X)$, with $h^1(X,\mathcal{O}_X) = h^1(Y,\mathcal{O}_Y)=0$. 
\end{Definition}

It is an immediate consequence of the Lefschetz hyperplane theorem that a smooth variety $Y$, of Picard rank one and dimension at least four, is Lefschetz divisorially equivalent to any of its smooth ample divisors. If the assumption that the Picard rank is one is removed in this case, B. Hassett, H-W. Lin and C-L. Wang \cite{HLW01} exhibited an example of a variety and an ample divisor $D\subset Y$ whose Picard groups are isomorphic by the Lefschetz hyperplane theorem, but such that the nef cone is not preserved; hence $D$ and $Y$ are not Lefschetz divisorially equivalent.

The previous definition is not restricted to subvarieties of codimension one. In fact, the results in this paper involve classical spaces and subvarieties of high codimension, such that the birational geometry of both objects can now be linked using Definition \ref{Pre}. Let us recall those spaces so we can state our results precisely.

Let $V,W$ be two $K$-vector spaces of dimensions $n+1$ and $m+1$, respectively, with $n\leq m$. The field $K$ is algebraically closed of characteristic zero. We will denote by $\mathcal{X}(n,m)$ the space of complete collineations $V\rightarrow W$ and by $\mathcal{Q}(n)$ the space of complete $(n-1)$-dimensional quadrics of $V$. In \cite{Va82}, \cite{Va84}, I. Vainsencher showed that these spaces can be understood as sequences of blow-ups along the subvariety parametrizing rank one matrices and the strict transforms of its secant varieties. Such blowups are central characters of this paper, so let us be more precise about them. Given an irreducible and reduced non-degenerate variety $X\subset\P^N$, and a positive integer $h\leq N$, the \textit{$h$-secant variety} $\sec_h(X)$ of $X$ is the subvariety of $\P^N$ obtained as the closure of the union of all $(h-1)$-planes spanned by $h$ general points of $X$. Spaces of matrices and symmetric matrices admit a natural stratification dictated by the rank and observe that a general point of the $h$-secant variety of a Segre, or a Veronese, corresponds to a matrix of rank $h$. More precisely, let $\mathbb{P}^N$ be the projective space parametrizing $(n+1)\times (n+1)$ matrices modulo scalars, $\mathbb{P}^{N_{+}}$ the subspace of symmetric matrices, $\mathcal{S}\subset\mathbb{P}^N$ the Segre variety, and $\mathcal{V}\subset\mathbb{P}^{N_{+}}$ the Veronese variety. Set $\mathcal{X}(n):=\mathcal{X}(n,n)$ and denote by $\mathcal{X}(n)_i$, $\mathcal{Q}(n)_i$ respectively the varieties obtained at the step $i$ in Vainsencher's constructions \ref{ccc}; which roughly says that $\mathcal{X}(n)_i$, $\mathcal{Q}(n)_i$ are the blow-ups respectively of $\P^N$ and $\P^{N_{+}}$ along all the $j$-secant varieties of $\mathcal{S}$ and $\mathcal{V}$ for $j\le i$. Since $\sec_h(\mathcal{V}) = \sec_h(\mathcal{S})\cap\mathbb{P}^{N_{+}}$, the natural inclusion $\mathbb{P}^{N_{+}}\hookrightarrow\mathbb{P}^{N}$ lifts to an embedding $\mathcal{Q}(n)_i\hookrightarrow\mathcal{X}(n)_i$ for any $i$.

These families of spaces $\mathcal{X}(n)_i$ and $\mathcal{Q}(n)_i$ are very different in may aspects, for instance their dimensions are different. However, when it comes to certain birational invariants they are indistinguishable. Indeed, by Proposition \ref{theff} and Lemma \ref{lemma_mov}, we have our first result.

\begin{thm1}\label{thm1}
\textit{
The varieties $\mathcal{X}(n)_i$ and $\mathcal{Q}(n)_i$ are Lefschetz divisorially equivalent via the usual embedding for any $i$. In particular, the spaces of complete collineations $\mathcal{X}(n)$ and quadrics $\mathcal{Q}(n)$ are Lefschetz divisorially equivalent via the usual embedding.}
\end{thm1}

A more thorough version of the Weak Lefschetz Principle in our context would be the following: a small modification of the ambient variety $Y$ restricts to a small modification, possibly an isomorphism, of the subvariety $X$, and all the small modifications of $X$ can be obtained in this way. This is stronger than determining invariants and it is the content of the following definition. We will test this definition on some Mori dream spaces. The Mori chamber decomposition of $\Eff(X)$, of a Mori dream space $X$, will be denoted by $\MCD(X)$. 

\begin{Definition}\label{main_def1_intro}
Let $X,Y$ be as in the previous definition, we say that they are \textit{birational twins} if they are Lefschetz divisorially equivalent Mori dream spaces and in addition $i^{*}\MCD(Y) = \MCD(X)$.
Also, they are said to be \textit{strong birational twins} if they are birational twins and in addition $i^{*}\SBLD(Y) = \SBLD(X)$.
\end{Definition}

Our next result shows that the relationship between the birational geometry of complete collineations and complete quadrics, in some cases, goes deeper than Theorem A. The following result follows from Theorem \ref{MCD_main} and Corollary \ref{CQ3}. 

\begin{thm2}\label{thm2}
\textit{
The varieties $\mathcal{X}(n)_3$ and $\mathcal{Q}(n)_3$ are birational twins for any $n\geq 1$. Furthermore, $\mathcal{X}(3)$ and $\mathcal{Q}(3)$ are strong birational twins.}
\end{thm2}

In general, the spaces $\mathcal{X}(n)_i,\mathcal{Q}(n)_i$ are Mori dream spaces for any $i$; we can actually count the number of chambers in the Mori chamber decomposition in various cases. Furthermore, the ample, movable and effective cones are preserved by Theorem A. However, we do not know if the structure of the Mori chambers is also preserved. In other words, if Theorem B holds in general.

\begin{Question}
Are $\mathcal{X}(n)_i$ and $\mathcal{Q}(n)_i$ birational twins for $n\geq 4$ and any $i\ge 4$?
\end{Question}

We organize this paper as follows. In Section \ref{bir_twin}, we discuss Definitions \ref{Pre} and \ref{main_def1_intro} and the relations among the various flavors of these definitions. In particular, Theorem \ref{thmex} provides examples of Mori dream spaces that are strongly Lefschetz divisorially equivalent but not birational twins, and of Mori dream spaces that are birational twins but not strong birational twins. In Section \ref{sec1}, we introduce the spaces of complete collineations and quadrics, and we study their birational geometry from a Mori theoretic viewpoint. Finally, in Section \ref{sec_flip} we recall the modular description of the unique flip of $\mathcal{Q}(3)$ given in \cite[Section 5.2]{Ce15}, and based on it, we give a conjectural description of the unique flip of $\mathcal{X}(3)$.    

\subsection*{Acknowledgments}
The first named author is currently a CONACYT-Research Fellow and during the preparation of this article he was partly supported by the CONACYT Grant CB-2015/253061. The second named author is a member of the Gruppo Nazionale per le Strutture Algebriche, Geometriche e le loro Applicazioni of the Istituto Nazionale di Alta Matematica "F. Severi" (GNSAGA-INDAM). We thank the organizers of the conferences \textit{Birational Geometry and Moduli Spaces}, held in Rome, Italy, and \textit{Moduli Spaces in Algebraic Geometry and Applications}, held in Campinas, Brazil, where part of this work was carried out. We have benefited from discussion with G. Reyes-Ahumada. We thank the referee for the comments that helped us to improve the paper.

\section{Lefschetz divisorially equivalent varieties and birational twins}\label{bir_twin}
\noindent
This section contains preliminaries and explores various aspects of definitions \ref{Pre} and \ref{main_def1_intro}. In particular, Theorem \ref{thmex} and Proposition \ref{strongtwins} are the main results of this section.

Let $X$ be a normal projective variety over an algebraically closed field of characteristic zero. We denote by $N^1(X)$ the real vector space of $\mathbb{R}$-Cartier divisors modulo numerical equivalence. 
The \emph{nef cone} of $X$ is the closed convex cone $\Nef(X)\subset N^1(X)$ generated by classes of nef divisors. The \emph{movable cone} of $X$ is the convex cone $\Mov(X)\subset N^1(X)$ generated by classes of \emph{movable divisors}; these are Cartier divisors whose stable base locus has codimension at least two in $X$.
The \emph{effective cone} of $X$ is the convex cone $\Eff(X)\subset N^1(X)$ generated by classes of \emph{effective divisors}. We have inclusions $\Nef(X)\ \subset \ \overline{\Mov(X)}\ \subset \ \overline{\Eff(X)}$. 

We will denote by $N_1(X)$ the real vector space of numerical equivalence classes of $1$-cycles on $X$. The closure of the cone in $N_1(X)$ generated by the classes of irreducible curves in $X$ is called the \textit{Mori cone} of $X$, we will denote it by $\NE(X)$. 

A class $[C]\in N_1(X)$ is called \textit{moving} if the curves in $X$ of class $[C]$ cover a dense open subset of $X$. The closure of the cone in $N_1(X)$ generated by classes of moving curves in $X$ is called the \textit{moving cone} of $X$ and we will denote it by $\mov(X)$.  
We refer to \cite[Chapter 1]{De01} for a comprehensive treatment of these topics. 

We say that a birational map  $f: X \dasharrow X'$ to a normal projective variety $X'$  is a \emph{birational contraction} if its inverse does not contract any divisor. 
We say that it is a \emph{small $\mathbb{Q}$-factorial modification} 
if $X'$ is $\mathbb{Q}$-factorial  and $f$ is an isomorphism in codimension one.
If  $f: X \dasharrow X'$ is a small $\mathbb{Q}$-factorial modification, then 
the natural pull-back map $f^*:N^1(X')\to N^1(X)$ sends $\Mov(X')$ and $\Eff(X')$
isomorphically onto $\Mov(X)$ and $\Eff(X)$, respectively.
In particular, we have $f^*(\Nef(X'))\subset \overline{\Mov(X)}$.

\begin{Definition}\label{def:MDS} 
A normal projective $\mathbb{Q}$-factorial variety $X$ is called a \emph{Mori dream space}
if the following conditions hold:
\begin{enumerate}
\item[-] $\Pic{(X)}$ is finitely generated, or equivalently $h^1(X,\mathcal{O}_X)=0$,
\item[-] $\Nef{(X)}$ is generated by the classes of finitely many semi-ample divisors,
\item[-] there is a finite collection of small $\mathbb{Q}$-factorial modifications
 $f_i: X \dasharrow X_i$, such that each $X_i$ satisfies the second condition above, and $
 \Mov{(X)} \ = \ \bigcup_i \  f_i^*(\Nef{(X_i)})$.
\end{enumerate}
\end{Definition}

By \cite[Corollary 1.3.2]{BCHM10} smooth Fano varieties are Mori dream spaces. In fact, there is a larger class of varieties called log Fano varieties which are Mori dream spaces as well. By work of M. Brion \cite{Br93} we have that $\mathbb{Q}$-factorial spherical varieties are Mori dream spaces. An alternative proof of this result can be found in \cite[Section 4]{Pe14}. 

The collection of all faces of all cones $f_i^*(\Nef{(X_i)})$ in Definition \ref{def:MDS} forms a fan which is supported on $\Mov(X)$.
If two maximal cones of this fan, say $f_i^*(\Nef{(X_i)})$ and $f_j^*(\Nef{(X_j)})$, meet along a facet,
then there exist a normal projective variety $Y$, a small modification $\varphi:X_i\dasharrow X_j$, and $h_i:X_i\rightarrow Y$ and $h_j:X_j\rightarrow Y$ small birational morphism of relative Picard number one such that $h_j\circ\varphi = h_i$. The fan structure on $\Mov(X)$ can be extended to a fan supported on $\Eff(X)$ as follows. 

\begin{Definition}\label{MCD}
Let $X$ be a Mori dream space. We describe a fan structure on the effective cone $\Eff(X)$, called the \emph{Mori chamber decomposition}.
We refer to \cite[Proposition 1.11]{HK00} and \cite[Section 2.2]{Ok16} for details.
There are finitely many birational contractions from $X$ to Mori dream spaces, denoted by $g_i:X\dasharrow Y_i$.
The set $\Exc(g_i)$ of exceptional prime divisors of $g_i$ has cardinality $\rho(X/Y_i)=\rho(X)-\rho(Y_i)$.
The maximal cones $\mathcal{C}$ of the Mori chamber decomposition of $\Eff(X)$ are of the form: $\mathcal{C}_i \ = \left\langle g_i^*\big(\Nef(Y_i)\big) , \Exc(g_i) \right\rangle$. We call $\mathcal{C}_i$ or its interior $\mathcal{C}_i^{^\circ}$ a \emph{maximal chamber} of $\Eff(X)$.
\end{Definition}

\begin{Definition}
Let $X$ be a normal projective variety with finitely generated divisor class group $\Cl(X) := {\rm WDiv}(X)/{\rm PDiv}(X)$, in particular $h^1(X,\mathcal O_X)=0$. The \textit{Cox sheaf} and {\em Cox ring}
of $X$ are defined as
\[
 \mathcal R := \bigoplus_{[D]\in \Cl(X)}\mathcal{O}_X(D)
 \qquad
 \qquad
 \Cox(X) := \Gamma(X,\mathcal R)
\]
\end{Definition}

Recall that $\mathcal R$ is a sheaf of
${\rm Cl}(X)$-graded $\mathcal O_X$-algebras, 
whose multiplication maps are discussed in
~\cite[Section 1.4]{ADHL15}. In case the divisor
class group is torsion-free, one can just 
take the direct sum over a subgroup of
${\rm WDiv}(X)$, isomorphic to ${\rm Cl}(X)$ 
via the quotient map, getting immediately a sheaf of $\mathcal O_X$-algebras.
Denote by $\widehat X$ the 
relative spectrum of $\mathcal R$ 
and by $\overline X$ the spectrum
of $\Cox(X)$. The ${\rm Cl}(X)$-grading
induces an action of the quasitorus 
$H_X := {\rm Spec}\,\mathbb C[{\rm Cl}(X)]$
on both spaces. The inclusion 
$\mathcal O_X\to\mathcal R$ induces 
a good quotient $p_X\colon \widehat X\to X$
with respect to this action.
Summarizing, we have the following	
diagram
  \[
  \begin{tikzpicture}[xscale=0.65,yscale=-1.2]
    \node (A0_0) at (0, 0) {$\widehat{X}$};
    \node (A0_1) at (1, 0) {$\subseteq\overline{X}$};
    \node (A1_0) at (0, 1) {$X$};
    \path (A0_0) edge [->,swap]node [auto] {$\scriptstyle{p_X}$} (A1_0);
  \end{tikzpicture}
  \]
and we will refer to it as the {\em Cox 
construction} of $X$. In case $\Cox(X)$
is a finitely generated algebra, the complement
of $\widehat X$ in the affine variety 
$\overline X$ has codimension $\geq 2$.
This subvariety is the {\em irrelevant locus}
and its defining ideal is the {\em irrelevant
ideal} $\mathcal J_{\rm irr}(X) \subseteq 
\Cox(X)$.

Let $X$ be a normal $\mathbb{Q}$-factorial projective variety, and let $D$ be an effective $\mathbb{Q}$-divisor on $X$. The stable base locus $\textbf{B}(D)$ of $D$ is the set-theoretic intersection of the base loci of the complete linear systems $|sD|$ for all positive integers $s$ such that $sD$ is integral. In other words,
$$\textbf{B}(D) = \bigcap_{s > 0}B(sD)$$
Since stable base loci do not behave well with respect to numerical equivalence, we will assume that $h^{1}(X,\mathcal{O}_X)=0$. This implies that linear and numerical equivalence of $\mathbb{Q}$-divisors coincide. 

Since numerically equivalent $\mathbb{Q}$-divisors on $X$ have the same stable base locus, then $\overline{\Eff}(X)$ the pseudo-effective cone  of $X$ can be decomposed into chambers depending on the stable base locus of the corresponding linear series. This decomposition is called \textit{stable base locus decomposition}. 

If $X$ is a Mori dream space, satisfying then the condition $h^1(X,\mathcal{O}_X)=0$, determining the stable base locus decomposition of $\Eff(X)$ is a first step in order to compute its Mori chamber decomposition. 

\begin{Remark}\label{SBLMC}
Recall that two divisors $D_1,D_2$ are said to be \textit{Mori equivalent} if $\textbf{B}(D_1) = \textbf{B}(D_2)$ and the following diagram of rational maps is commutative
   \[
  \begin{tikzpicture}[xscale=1.5,yscale=-1.2]
    \node (A0_1) at (1, 0) {$X$};
    \node (A1_0) at (0, 1) {$X(D_1)$};
    \node (A1_2) at (2, 1) {$X(D_2)$};
    \path (A1_0) edge [->]node [auto] {$\scriptstyle{}$} node [rotate=180,sloped] {$\scriptstyle{\widetilde{\ \ \ }}$} (A1_2);
    \path (A0_1) edge [->,dashed]node [auto] {$\scriptstyle{\phi_{D_2}}$} (A1_2);
    \path (A0_1) edge [->,swap, dashed]node [auto] {$\scriptstyle{\phi_{D_1}}$} (A1_0);
  \end{tikzpicture}
  \]
where the horizontal arrow is an isomorphism. Therefore, the Mori chamber decomposition is a refinement of the stable base locus decomposition. 
\end{Remark}

Let $X$ be a Mori dream space with Cox ring
$\Cox(X)$ and grading matrix $Q$. The matrix $Q$ 
defines a surjection
\[
 Q\colon E\to{\rm Cl}(X)
\]
from a free, finitely generated, 
abelian group $E$ to the divisor 
class group of $X$.
Denote by $\gamma$ the positive quadrant of 
$E_{\mathbb Q} := E\otimes_{\mathbb Z}\mathbb Q$.
Let $e_1,\dots,e_r$ be the canonical 
basis of $E_{\mathbb Q}$.
Given a face $\gamma_0\preceq\gamma$, we say that $i\in\{1,\dots,r\}$ is a cone index 
of $\gamma_0$ if $e_i\in \gamma_0$.
The face $\gamma_0$ is an {\em $\mathfrak F$-face}
if there exists a point of $\overline X 
= {\rm Spec}(\Cox(X))$ whose $i$-th coordinate
is non-zero exactly when $i$ is a cone index 
of $\gamma_0$ \cite[Construction 3.3.1.1]{ADHL15}.
The set of these points is denoted by 
$\overline X(\gamma_0)$.
Given the Cox construction of $X$ we denote by $X(\gamma_0)\subseteq X$ the 
image of $\overline X(\gamma_0)$, and given an $\mathfrak F$-face $\gamma_0$
its image $Q(\gamma_0)\subseteq {\rm Cl}(X)_{\mathbb Q}$
is an {\em orbit cone} of $X$. The set of all orbit
cones of $X$ is denoted by $\Omega$.
Accordingly to \cite[Definition 3.1.2.6]{ADHL15}
an effective class $w\in {\rm Cl}(X)$ defines the {\em GIT
chamber}
\stepcounter{thm}
\begin{equation}\label{gitch}
\lambda(w):=\bigcap_{\{\omega\in\Omega\, :\, w\in\omega\}}\omega
\end{equation}
If $w$ is an ample class of $X$, then the corresponding
GIT chamber is the semi-ample cone of $X$.
The variety $X$ can be reconstructed from
the pair $(\Cox(X),\Phi)$ formed by
the Cox ring together with a {\em bunch of cones},
consisting of certain subsets of the orbit cones
~\cite[Definition 3.1.3.2]{ADHL15}. 



We now explore different aspects of definitions \ref{Pre} and \ref{main_def1_intro}. We exhibit two varieties which are birational twins but fail to be strong birational twins. We do this inspired by \cite[Theorem 1.1.]{LMR18}. We also exhibit two Mori dream spaces which are strongly Lefschetz divisorially equivalent but fail to be birational twins. 

\begin{thm}\label{thmex}
Let $Z$ be the toric variety with Cox ring
$K[T_1,\dots,T_{11}]$ whose 
grading matrix and irrelevant ideal are
the following
\[
 Q = \begin{bmatrix}
  1&1&2&2&2&2&1&0&0&0&0\\
  0&0&1&1&1&1&2&1&1&1&1
 \end{bmatrix}
 \qquad
  \mathcal J_{\rm irr}(Z) 
  = 
  \langle T_1,T_2\rangle
  \cap
  \langle T_3,\dots,T_{11}\rangle
\]
and let $F = T_3T_8+T_4T_9+T_1T_7, G = T_5T_{9}+T_6T_{11}+T_2T_7$. Then the ring $\frac{K[T_1,\dots,T_{11}]}{(F,G)}$ is the Cox ring of a projective normal $\mathbb{Q}$-factorial Mori dream space $X\subset Z$ of Picard rank two. The varieties $X$ and $Z$ are birational twins but fail to be strong birational twins.

Furthermore, the ring $\frac{K[T_1,\dots,T_{11}]}{(F,G,T_7+T_1T_{11}^2+T_2T_8T_{11}+T_2T_{10}^2+T_2T_8T_9)}$ is the Cox ring of a projective normal $\mathbb{Q}$-factorial Mori dream space $Y\subset X$ of Picard rank two. The varieties $Y$ and $X$ are strongly Lefschetz divisorially equivalent but fail to be birational twins.  
\end{thm}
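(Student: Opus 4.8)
The plan is to exploit that $Z$, $X$ and $Y$ all have Picard rank two, so that $\Cl\otimes\R=\R^2$ and each of $\Eff$, $\overline{\Mov}$, $\Nef$, $\MCD$ and $\SBLD$ is just a subdivision of the two--dimensional effective cone into finitely many subcones; comparing two of these varieties then reduces to comparing a short list of rays, in the spirit of \cite[Theorem 1.1]{LMR18}. Reading the classes off the columns of $Q$ gives $\deg T_1=\deg T_2=(1,0)$, $\deg T_3=\dots=\deg T_6=(2,1)$, $\deg T_7=(1,2)$ and $\deg T_8=\dots=\deg T_{11}=(0,1)$, and one checks at once that $F$ and $G$ are homogeneous of degree $(2,2)$ while the third relation is homogeneous of degree $(1,2)$. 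The first task is the ring theory: I would verify that $F,G$ (respectively $F,G$ together with the third relation) form a regular sequence, that the quotient is a normal integral domain — normality via Serre's criterion, using that a complete intersection is Cohen--Macaulay and bounding the singular locus in codimension at least two through the Jacobian — and that each $T_i$ remains a prime element, the $T_i$ being pairwise non--associated. Together with the surjectivity of $Q\colon\Z^{11}\to\Z^2$ this lets me invoke the characterization of Cox rings of \cite{ADHL15} to conclude that the quotients are the Cox rings of normal $\Q$--factorial projective varieties $X\subset Z$ and $Y\subset X$ with $\Cl(X)=\Cl(Y)=\Z^2$; being complete intersections inside the toric, hence Mori dream, space $Z$ and cut out by classes pulled back isomorphically on the divisor class group, they are themselves Mori dream spaces. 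For $Y$ I would moreover use that the third relation has the shape $T_7+(\text{monomials not involving }T_7)$, so that $T_7$ can be eliminated and $\Cox(Y)$ rewritten on the variables $T_1,\dots,T_6,T_8,\dots,T_{11}$ of degrees $(1,0),(2,1),(0,1)$ alone.

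Next I would compute the divisorial cones. For the toric variety $Z$ the grading matrix gives $\Eff(Z)=\overline{\Mov(Z)}=\cone((1,0),(0,1))$, since each extremal ray is carried by at least two of the variables and $(1,2)$ is interior; the GIT analysis of \eqref{gitch} against the prescribed irrelevant ideal identifies $\Nef(Z)=\cone((1,0),(2,1))$, and the interior walls of the resulting fan are the rays $(2,1)$ and $(1,2)$, giving three maximal chambers $C_1=\cone((1,0),(2,1))$, $C_2=\cone((2,1),(1,2))$ and $C_3=\cone((1,2),(0,1))$. Running the same orbit--cone computation for $X$ and for $Y$ from their Cox rings, I would check that $i^*$ matches $\Eff$, $\overline{\Mov}$ and $\Nef$ in both cases, which yields the Lefschetz divisorial equivalence of the pairs $(X,Z)$ and $(Y,X)$. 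Note that since $Y$ has no generator of degree $(1,2)$, its movable cone is subdivided only by the single interior ray $(2,1)$.

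It then remains to compare $\MCD$ and $\SBLD$, which is where the two phenomena appear. For the toric $Z$ one has $\MCD(Z)=\SBLD(Z)=\{C_1,C_2,C_3\}$. For $X$ I would show that both $(2,1)$ and $(1,2)$ are genuine flipping walls, the two small $\Q$--factorial modifications on either side being non--isomorphic, so that $\MCD(X)=\{C_1,C_2,C_3\}=i^*\MCD(Z)$ and $(X,Z)$ are birational twins; the small modifications would be produced by restricting those of $Z$. I would then compute $\SBLD(X)$ directly from the Cox ring, describing for a class $w$ the stable base locus $\mathbf{B}(w)$ as the image in $X$ of the zero locus of the ideal generated by $\bigcup_{s>0}\Cox(X)_{sw}$ modulo $(F,G)$. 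The crux is that on $X$ this locus is unchanged as $w$ crosses the ray $(1,2)$ (whereas on $Z$ it changes), so that $\SBLD(X)=\{C_1,\;C_2\cup C_3\}$ is strictly coarser than $\MCD(X)$; by Remark \ref{SBLMC} this already forces $i^*\SBLD(Z)\neq\SBLD(X)$, so $(X,Z)$ are not strong birational twins. Finally, for $Y$ the elimination of the degree $(1,2)$ generator removes the ray $(1,2)$ from the chamber fan, whence $\MCD(Y)=\{C_1,\;C_2\cup C_3\}\neq i^*\MCD(X)$ and $(Y,X)$ are not birational twins, while the same elimination leaves the stable base loci above unchanged, giving $\SBLD(Y)=i^*\SBLD(X)$ and hence, together with the coincidence of the cones, that $Y$ and $X$ are strongly Lefschetz divisorially equivalent.

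I expect the hard part to be precisely this stable base locus computation on $X$: controlling, for $w$ in each chamber, the ideal generated by $\bigcup_{s>0}\Cox(X)_{sw}$ modulo $(F,G)$ and its zero locus, and proving the exact coincidence of $\mathbf{B}(w)$ across the wall $(1,2)$ together with its genuine jump across $(2,1)$. The companion points — that the two modifications of $X$ meeting along $(1,2)$ are non--isomorphic, and that eliminating $T_7$ alters neither the cones nor the stable base loci of $Y$ — are the remaining and more routine verifications.
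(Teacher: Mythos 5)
Your proposal follows essentially the same strategy as the paper: everything is reduced to the rank-two chamber combinatorics of \cite{LMR18} applied to the degrees of the Cox-ring generators, and all of your conclusions ($\MCD(X)=i^{*}\MCD(Z)$ with three chambers, $\SBLD(X)$ with two, $\MCD(Y)$ with two because the degree-$(1,2)$ generator $T_7$ can be eliminated, $\SBLD(Y)=j^{*}\SBLD(X)$) agree with the paper's. Two of your sub-steps are executed differently there, and the paper's choices make precisely the parts you flag as hard essentially automatic. First, to certify that the displayed rings are Cox rings the paper does not check primality of the residual $T_i$'s via the abstract characterization; it verifies $\codim_{\overline X}(\overline X\setminus\widehat X)\geq 2$ and applies \cite[Corollary 4.1.1.5]{ADHL15}, and it pins down $\Cl(X)\cong\mathbb{Z}^2$ geometrically: on the locus $T_2T_8\neq 0$ one solves $F,G$ for $T_3,T_7$, so the complement of the images of $V(T_2),V(T_8)$ is an affine space and $\rho(X)\leq 2$, while the contraction associated to the ray $(1,0)$ gives a surjection $X\to\mathbb{P}^1$ forcing $\rho(X)\geq 2$ --- a step your outline leaves implicit. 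Second, the stable-base-locus comparison you single out as the crux is, in the paper, a short application of \cite[Lemmas 4.3, 4.4, 4.5]{LMR18} to the explicit loci $V(T_i:w_i\leq\lambda)$, $V(T_i:w_i\leq\lambda')$, $V(T_i:w_i\geq\lambda_A)$ inside $\overline X$: the chambers $\lambda$ and $\lambda'$ have different associated loci (whence a genuine Mori wall at $(1,2)$), but the distinguishing component $\{T_1=T_2=T_8=\dots=T_{11}=0\}$ lies in $V(T_1,T_2)$, a component of the irrelevant locus, so it does not contribute to the stable base locus and the $\SBLD$ wall at $(1,2)$ disappears. Your plan of computing $\mathbf{B}(w)$ from $\bigcup_{s>0}\Cox(X)_{sw}$ and of exhibiting non-isomorphic small modifications by hand would work, but it amounts to reproving those lemmas; invoking them directly is the economical route the paper takes.
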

\begin{proof}
Let us denote by $w_i\in\mathbb Z^2$ the 
degree of $T_i$. We will denote by $V(T_{i_1},\dots,T_{i_r}):=\{T_{i_1}=\dots =T_{i_r}=0\}$. The following picture represents the degrees of the generators of the Cox ring
\begin{center}
 \begin{tikzpicture}
   \path [fill=lightgray] (0,0) -- (1,0) -- (2,1);
  \foreach \x/\y in {1/0/1,0/1,2/1,1/2}
   {\draw[fill] (\x,\y) circle [radius=0.05];
    \draw[-] (0,0) -- (\x,\y);}
   \node[below right] at (1,0) {$w_1,w_2$};
   \node[above left] at (0,1) {$w_8,\dots,w_{11}$};
   \node[above right] at (2,1) {$w_3,\dots,w_6$};
   \node[above right] at (1,2) {$w_7$};
   \node at (0.3,1) {$\lambda'$};
   \node at (0.7,0.7) {$\lambda$};
   \node at (1,0.25) {$\lambda_A$};
 \end{tikzpicture}
\end{center}
where $\Eff(Z)=\Mov(Z) = \lambda_A\cup\lambda\cup\lambda'$ and $\Nef(Z) = \lambda_A$. Let $\overline Z = K^{11}$ be the spectrum of the Cox ring of $Z$ and let $\overline X$ be the affine subvariety 
defined by $\overline{X} = \{F = G = 0\}\subset\overline{Z}$.

A standard computation shows that $\overline{X}$ is irreducible and $\codim_{\overline{X}}(\Sing(\overline{X})) \geq 2$, and then Serre's criterion on normality yields that $\overline{X}$ is normal. Let $p_Z\colon\widehat Z\rightarrow Z$ be the characteristic space morphism
of $Z$, and let $\widehat X := \overline X \cap
\widehat Z$. Note that the irrelevant locus with respect to the chamber $\lambda_A$ has two components given by $\Gamma_1 = \{T_1 = T_2 = 0\}$ and $\Gamma_2 =\{T_3 = \dots = T_{11} = 0\}$. Hence $\widehat{Z} = K^{11}\setminus\{\Gamma_1\cup\Gamma_2\}$.

The image of $\widehat X$ via $p_Z$ is a subvariety $X$ of $Z$, let $i:X\rightarrow Z$ be the inclusion. Since $\overline{X}$ is irreducible and normal, and $X$ is a GIT quotient of $\overline{X}$ by a reductive group \cite[Theorem 1.24 (vi)]{Br10} yields that $X$ is irreducible and normal as well. 

Let $Z'$ be the smooth locus of $Z$. Note that $Z'$ contains the open subset $Z''$ of $Z$ obtained by removing the union of all the toric subvarieties of the form $p_Z(V(T_i,T_j))$. Since the Zariski
closure of $p_Z^{-1}(X\cap Z'')$ in $\widehat Z$ is $\widehat X$ we conclude that $\widehat X$ is the Zariski closure of $p_Z^{-1}(X\cap Z')$ in $\widehat Z$.

Note that, according to the grading matrix $Q$, the $K^{*}\times K^{*}$ action on $\overline{X}$ is given by 
$$
\begin{array}{ccc}
(K^{*}\times K^{*})\times\overline{X} & \longrightarrow & \overline{X}\\
((\lambda,\mu),(T_{1},...,T_{11})) & \longmapsto & (\lambda T_{1},\lambda T_2,\lambda^2\mu T_3,\dots, \lambda\mu^2 T_7,\mu T_8,\dots,\mu T_{11})
\end{array}
$$ 
Therefore, if $T_2\neq 0, T_8\neq 0$ we may set $\lambda = \mu = 1$, and write $T_3 = -\frac{T_4T_9+T_1T_7}{T_8}, T_7 = -\frac{T_5T_{9}+T_6T_{11}}{T_2}$. Therefore, if we remove the images of $V(T_2)\cup V(T_8)$ from $X$, the resulting variety is isomorphic to an affine space. Therefore, ${\rm Cl}(X)$ is generated by the classes of the images of the two irreducible divisors $V(T_i)\cap \overline X$, with $i\in\{2,8\}$, and $\rho(X)\leq 2$. Moreover, crossing the wall corresponding to $w_1,w_2$ we get a morphism $f:Z\rightarrow\mathbb{P}^1$. Furthermore, $X\subset Z$ is not contained in any fiber of $f$, and hence $f$ restricts to a surjective morphism $f_{|X}:X\rightarrow\mathbb{P}^1$. This forces $\rho(X)\geq 2$. Finally, we conclude that the images of $V(T_2),V(T_8)$ form a basis of $\Cl(X)$. So $\rho(X)=2$ and the pull-back $i^{*}:\Cl(Z)\rightarrow\Cl(X)$, induced by the inclusion, is an isomorphism. Now, note that $\Gamma_2\subset\overline{X}$ has codimension greater that two in $\overline{X}$, and $\Gamma_1\cap\overline{X} = \{T_1 = T_2 = T_3T_8+T_4T_9 = T_5T_{9}+T_6T_{11}=0\}$ has codimension two in $\overline{X}$. Hence $\codim_{\overline{X}}(\overline X\setminus\widehat X)\geq 2$ and \cite[Corollary 4.1.1.5]{ADHL15} yields that the Cox ring of $X$ is
$$
\Cox(X) = \frac{K[T_1,\dots,T_{11}]}{I(\overline X)}
$$ 
In general, if $X$ is a Mori dream space such that $\Cl(X)$ has rank two we can fix a total order on the classes in the effective cone $w\leq w'$ if $w$ is on the left of $w'$. Given two convex cones $\lambda,\lambda'$ contained in the effective cone we will write $\lambda\leq \lambda'$ if $w\leq w'$ for any $w\in\lambda$ and $w'\in\lambda$. Note that
$$
\begin{array}{l}
V(f_i\, :\, w_i\geq\lambda_A) = \{T_1=T_2=\widetilde{F}=\widetilde{G}=0\}\\[2pt]\end{array}$$
where $\widetilde{F} = T_3T_8+T_4T_9,\widetilde{G} = T_5T_{9}+T_6T_{11}$. Furthermore
$$\begin{array}{l}
V(f_i\, :\, w_i\leq\lambda) = \{T_7=T_8=T_9=T_{10}=T_{11}=0\};\\[2pt]
V(f_i\, :\, w_i\leq\lambda') = \{T_1=T_2=T_8=T_9=T_{10}=T_{11}=0\}\cup\{T_7=T_8=T_9=T_{10}=T_{11}=0\}.
\end{array}
$$
So $V(f_i\, :\, w_i\geq\lambda_A)\supset \{T_1=T_2=T_8=T_9=T_{10}=T_{11}=0\}$ contains a component of the set $V(f_i\, :\, w_i\leq\lambda')$. Finally, ~\cite[Lemmas 4.3, 4.4, 4.5]{LMR18} yields that $\MCD(X)=i^{*}\MCD(Z)$, and that $\SBLD(X)$ has just two chambers while $\SBLD(Z)=\MCD(Z)$ has three chambers. 

Let $\overline{Y} \subset \overline{Z} = K^{11}$ be the subvariety cut out by 
$$\overline{Y}=\{F=G=T_7+T_1T_{11}^2+T_2T_8T_{11}+T_2T_{10}^2+T_2T_8T_9=0\}$$ 
We have that $\overline{Y}$ is irreducible of dimension eight and $\codim_{\overline{Y}}(\Sing(\overline{Y})) \geq 2$, so Serre's criterion on normality yields that $\overline{Y}$ is normal, and \cite[Theorem 1.24 (vi)]{Br10} yields that the GIT quotient $Y$ is irreducible and normal as well.

If $T_2\neq 0, T_8\neq 0$ we may write $T_3 = -\frac{T_4T_9+T_1T_7}{T_8}, T_7 = -\frac{T_5T_{9}+T_6T_{11}}{T_2}$ and $T_{9} = -\frac{T_2T_{10}^2+T_2T_8T_{11}+T_1T_{11}^2+T_7}{T_2T_8}$. Therefore, $\overline{Y}\setminus (V(T_2)\cup V(T_8))$ is isomorphic to an affine space and ${\rm Cl}(Y)$ is generated by the classes of the images of the two irreducible divisors $V(T_i)\cap \overline Y$, with $i\in\{2,8\}$, and $\rho(Y)\leq 2$. Again crossing the wall corresponding to $w_1,w_2$ we get a morphism $f:Z\rightarrow\mathbb{P}^1$ and since $Y\subset Z$ is not contained in any fiber of $f$ this morphism restricts to a surjective morphism $f_{|Y}:Y\rightarrow\mathbb{P}^1$. This forces $\rho(Y)\geq 2$. We conclude that the images of $V(T_2),V(T_8)$ form a basis of $\Cl(Y)$. So $\rho(Y)=2$ and the pull-back $j^{*}:\Cl(X)\rightarrow\Cl(Y)$, induced by the inclusion $j:Y\rightarrow X$, is an isomorphism. As before $\Gamma_2\subset\overline{Y}$ has codimension greater that two in $\overline{Y}$, and $\Gamma_1\cap\overline{Y}$ has codimension two in $\overline{Y}$. Hence \cite[Corollary 4.1.1.5]{ADHL15} yields that the Cox ring of $Y$ is
$$
\Cox(Y) = \frac{K[T_1,\dots,T_{11}]}{I(\overline{Y})}
$$ 
Now, for the the Mori dream space $Y$ we have   
$$
\begin{array}{l}
V(f_i\, :\, w_i\leq\lambda) = V(f_i\, :\, w_i\leq\lambda') = \{T_7=T_8=T_9=T_{10}=T_{11}=0\}; \\ 
V(f_i\, :\, w_i\geq\lambda) = V(f_i\, :\, w_i\geq\lambda') = \{T_1=T_2=T_3=T_{4}=T_{5}=T_6 = T_7 = 0\}.
\end{array} 
$$
Hence ~\cite[Lemma 4.3]{LMR18} yields that $\MCD(Y)$ has two chambers while $\MCD(X)=i^{*}\MCD(Z)$ has three chambers.
\end{proof}

\begin{Remark} 
Note that any pair of varieties among the Mori dream spaces $Y\subset X\subset Z$ in Theorem \ref{thmex} gives a pair of Lefschetz divisorially equivalent varieties. On the other hand, $i^{*}\SBLD(Z)\neq \SBLD(X)$ and $(i\circ j)^{*}\MCD(Z)\neq \MCD(Y)$. Therefore, Lefschetz divisorially equivalent does not imply strongly Lefschetz divisorially equivalent nor birational twins.  
\end{Remark}

However, when a Mori dream space inherits the stable base locus decomposition from an ambient toric variety something more can be said. 

\begin{Remark}\label{toric}
Given a Mori dream space $X$, then there is an embedding $i:X\rightarrow \mathcal{T}_X$ into a simplicial projective toric variety $\mathcal{T}_X$ such that $i^{*}:\Pic(\mathcal{T}_X)\rightarrow \Pic(X)$ is an isomorphism which induces an isomorphism $\Eff(\mathcal{T}_X)\rightarrow \Eff(X)$, \cite[Proposition 2.11]{HK00}. Furthermore, the Mori chamber decomposition of $\Eff(\mathcal{T}_X)$ is a refinement of the Mori chamber decomposition of $\Eff(X)$. Indeed, if $\Cox(X) \cong \frac{K[T_1,\dots,T_s]}{I}$ where the $T_i$ are homogeneous generators with non-trivial effective $\Pic(X)$-degrees then $\Cox(\mathcal{T}_X)\cong K[T_1,\dots,T_s]$.
\end{Remark}

\begin{Proposition}\label{strongtwins}
Let $X\rightarrow \mathcal{T}_X$ be a Mori dream space embedded in its canonical toric embedding. Assume that both $X$ and $\mathcal{T}_X$ have Picard rank two. If $X$ and $\mathcal{T}_X$ are strongly Lefschetz divisorially equivalent then they are strong birational twins.
\end{Proposition}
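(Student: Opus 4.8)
The plan is to show that $X$ and $\mathcal{T}_X$ are birational twins, since the strong Lefschetz divisorial equivalence already supplies everything else required by Definition \ref{main_def1_intro}: both are Mori dream spaces ($\mathcal{T}_X$ is toric, $X$ is one by hypothesis), they are Lefschetz divisorially equivalent, and $i^{*}\SBLD(\mathcal{T}_X)=\SBLD(X)$. Thus it suffices to prove the single equality $i^{*}\MCD(\mathcal{T}_X)=\MCD(X)$. Since $\rho(X)=\rho(\mathcal{T}_X)=2$, via $i^{*}$ all the relevant cones live in the common two-dimensional space $\Pic(X)_{\mathbb{R}}\cong\Pic(\mathcal{T}_X)_{\mathbb{R}}$; there the effective cone is bounded by two extremal rays and each of $\MCD$, $\SBLD$ is an ordered chain of subcones, completely determined by its finite set of interior walls. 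In this language ``one decomposition refines another'' means ``its set of walls contains the other's'', and two decompositions coincide precisely when they refine each other.

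I would then record the two refinements that hold unconditionally. By Remark \ref{toric}, the canonical toric embedding satisfies that $i^{*}\MCD(\mathcal{T}_X)$ refines $\MCD(X)$. By Remark \ref{SBLMC} applied to $X$, the decomposition $\MCD(X)$ refines $\SBLD(X)$; combined with the hypothesis $\SBLD(X)=i^{*}\SBLD(\mathcal{T}_X)$ this says $\MCD(X)$ refines $i^{*}\SBLD(\mathcal{T}_X)$.

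The crux is to close the loop by proving that on the toric variety the two decompositions already agree, i.e. $\MCD(\mathcal{T}_X)=\SBLD(\mathcal{T}_X)$. Here I would use that on a $\mathbb{Q}$-factorial projective toric variety everything is combinatorial: the stable base locus of a torus-invariant divisor is the corresponding union of torus-invariant subvarieties, it is constant on the relative interior of each GIT ($=$ Mori) chamber, and it changes upon crossing any wall, since a flip inside $\Mov(\mathcal{T}_X)$ alters the relevant codimension-two locus, while crossing a wall into the rest of $\Eff(\mathcal{T}_X)$ brings a new prime divisor into the base locus. Hence no wall of $\MCD(\mathcal{T}_X)$ is erased when passing to $\SBLD(\mathcal{T}_X)$, and by Remark \ref{SBLMC} none is added, so the two coincide. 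In the rank-two setting this last point can be made completely explicit through the wall descriptions of \cite[Lemmas 4.3, 4.4, 4.5]{LMR18}, exactly as in the proof of Theorem \ref{thmex}, where indeed $\SBLD(Z)=\MCD(Z)$ for the ambient toric $Z$.

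Finally I would chain the inclusions of wall sets. Using $\MCD(\mathcal{T}_X)=\SBLD(\mathcal{T}_X)$ gives $i^{*}\MCD(\mathcal{T}_X)=i^{*}\SBLD(\mathcal{T}_X)=\SBLD(X)$, which by the second paragraph is refined by $\MCD(X)$; on the other hand $i^{*}\MCD(\mathcal{T}_X)$ refines $\MCD(X)$. Therefore $\MCD(X)$ and $i^{*}\MCD(\mathcal{T}_X)$ refine one another and so are equal, yielding $i^{*}\MCD(\mathcal{T}_X)=\MCD(X)$. Together with the $\SBLD$ equality coming from the hypothesis, this is exactly the assertion that $X$ and $\mathcal{T}_X$ are strong birational twins. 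I expect the genuine work to be concentrated in the toric identity $\MCD(\mathcal{T}_X)=\SBLD(\mathcal{T}_X)$; the surrounding sandwich argument is then purely formal.
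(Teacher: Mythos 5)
Your argument is correct and is essentially the paper's proof: the same sandwich of refinements ($i^{*}\MCD(\mathcal{T}_X)$ refines $\MCD(X)$ refines $\SBLD(X)=i^{*}\SBLD(\mathcal{T}_X)$) closed by the identity $\MCD(\mathcal{T}_X)=\SBLD(\mathcal{T}_X)$. The only difference is that the paper obtains that identity by directly citing \cite[Theorem 1.2]{LMR18} for toric varieties of Picard rank two, whereas your heuristic for arbitrary toric varieties (``the stable base locus changes upon crossing any wall'') is not justified in that generality and is not needed -- your fallback to the rank-two wall descriptions of \cite[Lemmas 4.3, 4.4, 4.5]{LMR18} is the right move and is exactly what that theorem packages.
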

\begin{proof}
By Remark \ref{toric} $\MCD(\mathcal{T}_X)$ refines $\MCD(X)$, and moreover $\MCD(X)$ refines $\SBLD(X)$. By our hypothesis $\SBLD(X) = i^{*}\SBLD(\mathcal{T}_X)$, where $i:X\rightarrow \mathcal{T}_X$ is the inclusion. Now, since $\mathcal{T}_X$ is a toric projective variety of Picard rank two ~\cite[Theorem 1.2]{LMR18} yields $\SBLD(\mathcal{T}_X) = \MCD(\mathcal{T}_X)$. Summing up, $\MCD(\mathcal{T}_X) = \SBLD(\mathcal{T}_X)$ refines $\MCD(X)$ which in turn refines $\SBLD(X) = \SBLD(\mathcal{T}_X) = \MCD(\mathcal{T}_X)$, and this forces $\MCD(X) = i^{*}\MCD(\mathcal{T}_X)$.
\end{proof}

\section{Birational twin varieties: complete collineations and quadrics}\label{sec1}
\noindent
This section is the core of the paper. It contains the definitions and crucial properties of the spaces of complete collineations and complete quadrics. It also contains the details of the proofs of Theorem A and Theorem B listed in the introduction.

Let $V$ and $W$ be $K$-vector spaces of dimension $n+1$ and $m+1$, respectively with $n\leq m$. Let $\mathbb{P}^N$ denote the projective space whose points parametrize non-zero linear maps $W\rightarrow V$ up to a scalar multiple. We recall that a point in this spaces is called a collineation from $W$ to $V$.

Observe that a full-rank collineation $p\in \Hom(W,V)$ induces another collineation $\wedge^k W\rightarrow \wedge^k V$, for any $k\le n+1$, by taking the wedge product. Consider the rational map
\stepcounter{thm}
\begin{equation}\label{graph}
\begin{array}{ccc}
\phi:\mathbb{P}(\Hom(W,V))& \dasharrow & \mathbb{P}(\Hom(\bigwedge^2 W,\bigwedge^2 V))\times\dots\times \mathbb{P}(\Hom(\bigwedge^{n+1} W,\bigwedge^{n+1} V))\\
 p & \longmapsto & (\wedge^2p,\dots,\wedge^{n+1}p)
\end{array}
\end{equation}

\begin{Definition}
The \textit{space of complete collineations} $\mathcal{X}(n,m)$ from $W$ to $V$ is defined as the closure of the graph of the rational map (\ref{graph}).
\end{Definition}

In order to understand the closure of the image of $\phi$, we now recall an alternative description of the space $\mathcal{X}(n,m)$.

Given an irreducible, reduced  and non-degenerate variety $X\subset\P^N$, for any $h<N$, we can consider the \emph{$h$-secant variety} of $X$, which is defined as the closure of the union of all $(h-1)$-planes spanned by $h$ general points in $X$. We denote this variety by $\sec_h(X)$. Observe that any point $p\in \mathbb{P}^N = \mathbb{P}(\Hom(W,V))$ can be represented by a $(n+1)\!\!\times \!\!(m+1)$ matrix $Z$, and that the locus of matrices of rank one is the Segre variety $\mathcal{S}\subset\mathbb{P}^N$. More generally, any $p\in \sec_h(S)$ can be represented by a matrix $Z$ which is a linear combination of $h$ matrices of rank one, and conversely. In other words, a point $p$ is contained in $\sec_h(S)$ if and only if the rank of its representing matrix is at most $h$. 

Setting up notation, we will think of the point $p = [z_{0,0}:\cdots:z_{n,m}]\in \mathbb{P}^N$ as the matrix
\stepcounter{thm}
\begin{equation}\label{matrix}
Z = \left(
\begin{array}{ccc}
z_{0,0} & \dots & z_{0,m}\\ 
\vdots & \ddots & \vdots\\ 
z_{n,0} & \dots & z_{n,m}
\end{array}\right)
\end{equation}

We can now describe the space $\mathcal{X}(n,m)$ as a sequence of blow-ups as follows.
\begin{Construction}\label{ccc}
Let us consider the following sequence of blow-ups:
\begin{itemize}
\item[-] $\mathcal{X}(n,m)_1$ is the blow-up of $\mathcal{X}(n,m)_0:=\mathbb{P}^{N}$ along the Segre variety $\mathcal{S}$;
\item[-] $\mathcal{X}(n,m)_2$ is the blow-up of $\mathcal{X}(n,m)_1$ along the strict transform of $\sec_2(\mathcal{S})$;\\
$\vdots$
\item[-] $\mathcal{X}(n,m)_i$ is the blow-up of $\mathcal{X}(n,m)_{i-1}$ along the strict transform of $\sec_i(\mathcal{S})$;\\
$\vdots$
\item[-] $\mathcal{X}(n,m)_n$ is the blow-up of $\mathcal{X}(n,m)_{n-1}$ along the strict transform of $\sec_n(\mathcal{S})$.
\end{itemize}
\end{Construction}

It follows from \cite{Va84} that the variety $\mathcal{X}(n,m)_n$ is isomorphic to $\mathcal{X}(n,m)$, the space of complete collineations from $W$ to $V$. Furthermore, we have that for any $i = 1,\dots,n$ the variety $\mathcal{X}(n,m)_{i}$ is smooth, the strict transform of $\sec_i(\mathcal{S})$ in $\mathcal{X}(n,m)_{i-1}$ is smooth, and the divisor $E_1\cup E_2\cup\dots \cup E_{i-1}$ in $\mathcal{X}(n,m)_{i-1}$ is simple normal crossing. When $n=m$ we will write $\mathcal{X}(n)$ for $\mathcal{X}(n,n)$.

Setting up notation, let $f_i:\mathcal{X}(n,m)_i\rightarrow \mathcal{X}(n,m)_{i-1}$ be the blow-up morphism and $E_i=\mathrm{Exc}(f_i)$ its exceptional divisor. By abusing notation, we will also denote by $E_i$ the strict transform in the subsequent blow-ups. Let $H$ denote the pull-back to $\mathcal{X}(n,m)_i$ of the hyperplane section of $\mathbb{P}^{N}$. We will denote by $f:\mathcal{X}(n,m)\rightarrow\mathbb{P}^N$ the composition of the $f_i$'s.

If $n = m$, then the last blow-up in Construction \ref{ccc} is the blow-up of a Cartier divisor and hence $\mathcal{X}(n)_n\cong\mathcal{X}(n)_{n-1}$. Furthermore, we have a distinguished linear subspace in $\mathbb{P}^N = \mathbb{P}(\Hom(V,V))$. Indeed, the space of symmetric matrices is defined by $\mathbb{P}^{N_{+}} :=\mathbb{P}(\Sym^2V) = \{z_{i,j}=z_{j,i} | \: i\neq j\}$, where $N_{+} = \binom{n+2}{2}-1$. Furthermore, the space $\mathbb{P}^{N_{+}}$ cuts out scheme-theoretically on $\mathcal{S}$ the Veronese variety $\mathcal{V}\subseteq \mathbb{P}^{N_{+}}$; which parametrizes $(n+1)\!\! \times \!\! (n+1)$ symmetric matrices of rank one. More generally, the space $\mathbb{P}^{N_{+}}$ cuts out scheme-theoretically on $\sec_h(\mathcal{S})$ the $h$-secant variety $\sec_h(\mathcal{V})$. 

Observe that a full-rank symmetric matrix $q\in \Sym^2(V)$ induces another symmetric matrix $\wedge^k q$, for any $k\le n+1$, by taking the wedge product. There is a rational map 
\stepcounter{thm}
\begin{equation}\label{ratq}
\begin{array}{ccc}
\rho: \mathbb{P}(\Sym^2V) & \dasharrow & \mathbb{P}(\Sym^2\bigwedge^2 V)\times\dots\times \mathbb{P}(\Sym^2\bigwedge^n V)\\
 q & \longmapsto & (\wedge^2q,\dots,\wedge^{n}q)
\end{array}
\end{equation}

\begin{Definition}
The \textit{space of complete quadrics} in $\P^n$ is defined as the closure of the graph of the map (\ref{ratq}). We denote this space by $\mathcal{Q}(n)$.
\end{Definition}

One can apply Construction \ref{ccc} in the symmetric setting to the spaces $\mathbb{P}^{N_{+}}$. In doing so, we obtain the following blow-up construction for the space of complete quadrics \cite[Theorem 6.3]{Va82}.

\begin{Construction}\label{ccq}
Let us consider the following sequence of blow-ups:
\begin{itemize}
\item[-] $\mathcal{Q}(n)_1$ is the blow-up of $\mathcal{Q}(n)_0:=\mathbb{P}^{N_{+}}$ along the Veronese variety $\mathcal{V}$;
\item[-] $\mathcal{Q}(n)_2$ is the blow-up of $\mathcal{Q}(n)_1$ along the strict transform of $\sec_2(\mathcal{V})$;\\
$\vdots$
\item[-] $\mathcal{Q}(n)_i$ is the blow-up of $\mathcal{Q}(n)_{i-1}$ along the strict transform of $\sec_i(\mathcal{V})$;\\
$\vdots$
\item[-] $\mathcal{Q}(n)_n$ is the blow-up of $\mathcal{Q}(n)_{n-1}$ along the strict transform of $\sec_n(\mathcal{V})$.
\end{itemize}
\end{Construction}

It follows from \cite{Va82} that the variety $\mathcal{Q}(n)_{n-1}\cong\mathcal{Q}(n)_n$ is isomorphic to the space of complete $(n-1)$-dimensional quadrics $\mathcal{Q}(n)$. Furthermore, the variety $\mathcal{Q}(n)_{i}$ is smooth, the strict transform of $\sec_i(\mathcal{V})$ in $\mathcal{Q}(n)_{i-1}$ is smooth, and the divisor $E_1^+\cup E_2^+\cup\dots \cup E_{i-1}^+$ in $\mathcal{Q}(n)_{i-1}$ is simple normal crossing for any $i = 1,\dots,n$. 

Setting up notation, let $f_i^{+}:\mathcal{Q}(n)_i\rightarrow \mathcal{Q}(n)_{i-1}$ be the blow-up morphism and $E_i^+=\mathrm{Exc}(f_i^+)$ its exceptional divisor. We will also denote by $E_i^+$ the strict transforms of this divisor in the subsequent blow-ups, if confusion does not arise. Let $H^{+}$ stand for the pull-back to $\mathcal{Q}(n)_i$ of the hyperplane section of $\mathbb{P}^{N_{+}}$. We will denote by $f^{+}:\mathcal{Q}(n)\rightarrow\mathbb{P}^{N_{+}}$ the composition of the $f_i^{+}$'s. 

\subsection{Birational geometry of the intermediate spaces} 
In this section we will investigate the birational geometry of the intermediate blow-ups $\mathcal{X}(n)_i,\mathcal{Q}(n)_i$ appearing in Constructions \ref{ccc}, \ref{ccq}. We begin by recalling the notion of spherical variety. 
\begin{Definition}
A \textit{spherical variety} is a normal variety $X$ together with an action of a connected reductive affine algebraic group $\mathscr{G}$, a Borel subgroup $\mathscr{B}\subset \mathscr{G}$, and a base point $x_0\in X$ such that the $\mathscr{B}$-orbit of $x_0$ in $X$ is a dense open subset of $X$. 

Let $(X,\mathscr{G},\mathscr{B},x_0)$ be a spherical variety. We distinguish two types of $\mathscr{B}$-invariant prime divisors: a \textit{boundary divisor} of $X$ is a $\mathscr{G}$-invariant prime divisor on $X$, a \textit{color} of $X$ is a $\mathscr{B}$-invariant prime divisor that is not $\mathscr{G}$-invariant. We will denote by $\mathcal{B}(X)$ and $\mathcal{C}(X)$ respectively the set of boundary divisors and colors of $X$.
\end{Definition}

\begin{Notation}
For any $i = 1,\dots,n+1$ let us denote by $D_i$ the strict transform of the divisor in $\mathbb{P}^N$ given by 
\stepcounter{thm}
\begin{equation}\label{eqcol}
\det\left(
\begin{array}{ccc}
z_{n-i+1,m-i+1} & \dots & z_{n-i+1,m}\\ 
\vdots & \ddots & \vdots\\ 
z_{n,m-i+1} & \dots & z_{n,m}
\end{array}\right)=0
\end{equation}
in the intermediate spaces of complete collineations. Similarly, we will denote by $D_i^{+}$ the analogous divisors in the intermediate spaces of complete quadrics.
\end{Notation}

\begin{Lemma}\label{cb}
Let $\mathcal{X}(n,m)_i$ be the intermediate space appearing at the step $1\leq i\leq n-1$ of Construction \ref{ccc}. Then $\mathcal{X}(n,m)_i$ is spherical. Furthermore, $\Pic(\mathcal{X}(n,m)_i) = \mathbb{Z}[H,E_1,\dots,E_i]$ and 
$$
\mathcal{B}(\mathcal{X}(n,m)_i) = 
\left\lbrace\begin{array}{ll}
\{E_1,\dots,E_{i}\} & \text{if n $<$ m}\\ 
\{E_1,\dots,E_{i},D_{n+1}\} & \text{if n $=$ m}
\end{array}\right.\quad
\mathcal{C}(\mathcal{X}(n,m)_i) = 
\left\lbrace\begin{array}{ll}
\{D_1,\dots,D_{n+1}\} & \text{if n $<$ m}\\ 
\{D_1,\dots,D_{n}\} & \text{if n $=$ m}
\end{array}\right.
$$
Finally, for the intermediate spaces $\mathcal{Q}(n)_i$ in Construction \ref{ccq} we have
$$\mathcal{B}(\mathcal{Q}(n)_i) = \{E_1^{+},\dots,E_i^{+},D_{n+1}^{+}\},\quad \mathcal{C}(\mathcal{Q}(n)_i) = \{D_1^{+},\dots,D_{n}^{+}\}.$$  
\end{Lemma}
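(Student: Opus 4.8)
The plan is to realize each intermediate space as a spherical variety for the obvious linear group, read off the Picard group from the blow-up construction, and then sort the $\mathscr{B}$-invariant prime divisors into boundary divisors and colors according to whether they meet the open $\mathscr{G}$-orbit.

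First I would fix the acting group. For collineations let $\mathscr{G} = \mathrm{GL}(V)\times\mathrm{GL}(W)$ act on $\mathbb{P}^N = \mathbb{P}(\Hom(W,V))$ by $(g,h)\cdot[Z] = [\,gZh^{-1}\,]$, and choose opposite Borel subgroups $\mathscr{B}_V\subset\mathrm{GL}(V)$ and $\mathscr{B}_W\subset\mathrm{GL}(W)$ (upper and lower triangular, respectively), with $\mathscr{B} = \mathscr{B}_V\times\mathscr{B}_W$. A Gauss-type row and column reduction shows that a maximal-rank $Z$ can be normalized by $\mathscr{B}$ to a fixed representative exactly when all the lower-right $j\times j$ minors $\delta_j(Z)$, $j=1,\dots,n+1$, are nonzero; hence the open $\mathscr{B}$-orbit is $\{\delta_1\cdots\delta_{n+1}\neq 0\}$, which already proves that $\mathbb{P}^N$ is spherical and that the complement of the open $\mathscr{B}$-orbit is exactly $\bigcup_{j=1}^{n+1}\{\delta_j=0\}$. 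The loci $\{\delta_j=0\}$ are the divisors $D_j$ of (\ref{eqcol}), and each $\delta_j$ is a $\mathscr{B}$-semi-invariant by the triangular shape of $\mathscr{B}$. Since the centers $\sec_j(\mathcal{S})$ are $\mathscr{G}$-invariant, they lie in the complement of the open $\mathscr{B}$-orbit, so this orbit lifts isomorphically to every $\mathcal{X}(n,m)_i$; thus each $\mathcal{X}(n,m)_i$ is again spherical and acquires no new colors under blow-up. The quadric case is identical with $\mathscr{G}=\mathrm{GL}(V)$ acting by congruence $g\cdot[q]=[\,g\,q\,g^{t}\,]$ on $\mathbb{P}^{N_{+}}=\mathbb{P}(\Sym^2 V)$, the $\delta_j^{+}$ being the lower-right minors of the symmetric matrix. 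For the Picard group I would argue inductively: $\Pic(\mathbb{P}^N)=\mathbb{Z}[H]$, and since each $\mathcal{X}(n,m)_{i-1}$ is smooth and $\sec_i(\mathcal{S})$ is smooth and irreducible (Construction \ref{ccc}), the blow-up formula gives $\Pic(\mathcal{X}(n,m)_i)=\Pic(\mathcal{X}(n,m)_{i-1})\oplus\mathbb{Z}[E_i]$, whence $\Pic(\mathcal{X}(n,m)_i)=\mathbb{Z}[H,E_1,\dots,E_i]$.

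It remains to split the $\mathscr{B}$-invariant prime divisors into the two types, using the standard dichotomy that a $\mathscr{B}$-invariant prime divisor is a color when it meets the open $\mathscr{G}$-orbit and a boundary divisor when it lies in the ($\mathscr{G}$-invariant) complement. The open $\mathscr{G}$-orbit in $\mathbb{P}^N$ is the maximal-rank locus, whose complement is $\sec_n(\mathcal{S})=\{\rank\le n\}$. A general point of each $D_j=\{\delta_j=0\}$ still has maximal rank, so every $D_j$ meets the open orbit and is a color — with one exception: when $n=m$ the minor $\delta_{n+1}=\det Z$ and $\{\det Z=0\}=\sec_n(\mathcal{S})$ is precisely the complement, making $D_{n+1}$ the unique non-exceptional boundary divisor (indeed $\det Z$ is then $\mathscr{G}$-semi-invariant, transforming by $\det(g)\det(h)^{-1}$). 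The exceptional divisors $E_1,\dots,E_i$ lie over the $\mathscr{G}$-invariant centers, hence are disjoint from the open orbit and are boundary divisors. Since $\sec_n(\mathcal{S})$ is a divisor exactly when $n=m$, these exhaust the boundary divisors, and by the previous paragraph the $D_j$ exhaust the colors, yielding the two displayed cases. For quadrics the determinant transforms by $\det(g)^2$, so $\{\det q=0\}$ is $\mathscr{G}$-invariant for every $n$; hence $D_{n+1}^{+}$ is always a boundary divisor and the colors are $D_1^{+},\dots,D_n^{+}$.

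The main obstacle I expect is the first paragraph: verifying rigorously, rather than heuristically, that the complement of the open $\mathscr{B}$-orbit is \emph{precisely} $\bigcup_j\{\delta_j=0\}$ and nothing more. This is exactly what guarantees that the listed $D_j$ are all the colors; it requires controlling the $\mathscr{B}$-reduction of an arbitrary matrix carefully and checking that the vanishing of some $\delta_j$ genuinely drops the point into a lower-dimensional $\mathscr{B}$-orbit — equivalently, that the $\delta_j$ generate the semigroup of $\mathscr{B}$-semi-invariants up to the $\mathscr{G}$-invariant factor $\det$. Once this is in place, everything else is the blow-up formula and the orbit-meeting criterion applied termwise.
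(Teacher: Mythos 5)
Your argument is correct, but it reaches the conclusion by a genuinely different route from the paper's. The paper establishes sphericity \emph{from above}: it quotes Vainsencher's theorem that $\mathcal{X}(n,m)$ is a wonderful $\mathscr{G}$-variety, pushes the dense $\mathscr{B}$-orbit down to each $\mathcal{X}(n,m)_i$ via the (equivariant) blow-up maps, bounds the number of boundary divisors and colors of the intermediate spaces by those of $\mathcal{X}(n,m)$, and then cites \cite[Proposition 3.6]{Ma18a} to identify them; the Picard group computation is left implicit. You instead work \emph{from below}: you exhibit the open $\mathscr{B}$-orbit on $\P^N$ directly as the nonvanishing locus of the corner minors $\delta_1,\dots,\delta_{n+1}$, observe that the $\mathscr{G}$-invariant centers $\sec_j(\mathcal{S})$ miss this orbit so that it lifts isomorphically through every blow-up, get $\Pic$ from the blow-up formula, and then sort the components of the complement of the open $\mathscr{B}$-orbit into colors and boundary divisors by whether they meet the open $\mathscr{G}$-orbit (using connectedness of $\mathscr{G}$ to identify ``disjoint from the open $\mathscr{G}$-orbit'' with ``$\mathscr{G}$-invariant''). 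This buys self-containedness --- no appeal to wonderfulness or to the external proposition --- at the price of the one step you rightly flag as the crux: that the complement of the open $\mathscr{B}$-orbit is \emph{exactly} $\bigcup_j\{\delta_j=0\}$, so that the strict transforms of the $D_j$ together with $E_1,\dots,E_i$ exhaust the $\mathscr{B}$-invariant prime divisors. That step is a classical fact (the big cell of the double Bruhat/LDU decomposition for $\mathscr{B}_V\times\mathscr{B}_W$ acting on matrices, and its congruence analogue for symmetric matrices), so the gap is fillable, but it does need to be proved or cited rather than asserted; the paper's proof is structured precisely to avoid having to carry out this verification. One small bookkeeping point in your favor: your choice of \emph{opposite} Borel subgroups is the one that actually makes the lower-right minors of \textup{(\ref{eqcol})} semi-invariant for the action $(g,h)\cdot Z=gZh^{-1}$, whereas the paper's ``pair of upper triangular matrices'' is stated loosely. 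Your identification of $D_{n+1}=\{\det Z=0\}=\sec_n(\mathcal{S})$ as the unique non-exceptional boundary divisor when $n=m$ (and of $D_{n+1}^{+}$ as always a boundary divisor, since the discriminant is $\mathscr{G}$-invariant and of codimension one in $\P^{N_{+}}$ for every $n$, while $\sec_n(\mathcal{S})$ has codimension $\geq 2$ when $n<m$) matches the statement exactly.
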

\begin{proof}
Note that $\mathcal{X}(n,m)_i$ is spherical, even though it is not wonderful, with respect to the action of $\mathscr{G} = SL(n+1)\times SL(m+1)$, with the Borel subgroup $\mathscr{B}$ given by pair of upper triangular matrices. Indeed, by \cite[Theorem 1]{Va84} $\mathcal{X}(n,m)$ is a wonderful variety and $\mathscr{B}$ acts on it with a dense open orbit, and since the actions of $\mathscr{B}$ on $\mathcal{X}(n,m)$ and $\mathcal{X}(n,m)_i$ are compatible with the blow-up maps in Construction \ref{ccc} we conclude that $\mathscr{B}$ acts on $\mathcal{X}(n,m)_i$ with a dense open orbit as well. For the definition and the basic properties of wonderful varieties we refer to \cite[Section 4.5.5]{ADHL15}.  

Since the actions of $\mathscr{G}$ on $\mathcal{X}(n,m)$ and $\mathcal{X}(n,m)_i$ are compatible with the blow-up morphism $\mathcal{X}(n,m)\rightarrow \mathcal{X}(n,m)_i$, the intermediate space $\mathcal{X}(n,m)_i$ has at most the same number of boundary divisors and colors as $\mathcal{X}(n,m)$. Now, to compute $\mathcal{B}(\mathcal{X}(n,m)_i)$ and $\mathcal{C}(\mathcal{X}(n,m)_i)$ it is enough to note that the ones listed in the statement are clearly boundary divisors and colors, and to apply \cite[Proposition 3.6]{Ma18a}.
\end{proof}

\begin{Proposition}\label{theff}
Let $n< m$. Then $D_1\sim H$ and
\stepcounter{thm}
\begin{equation}\label{Dk}
D_k \sim kH-\sum_{h=1}^{k-1}(k-h)E_h
\end{equation}
for $k = 2,\dots,n+1$. Furthermore, $\Eff(\mathcal{X}(n,m)_i) = \left\langle E_1,\dots,E_i,D_{n+1}\right\rangle$ and $\Nef(\mathcal{X}(n,m)_i) = \left\langle D_1,\dots,D_{i+1}\right\rangle$.

If $n = m$ we have $D_1\sim H$, $D_1^{+}\sim H^{+}$ and
\stepcounter{thm}
\begin{equation}\label{Dkq}
D_k \sim kH-\sum_{h=1}^{k-1}(k-h)E_h, \quad D_k^{+} \sim kH^{+}-\sum_{h=1}^{k-1}(k-h)E_h^{+}
\end{equation}
for $k = 2,\dots,n$, and for the boundary divisors $E_n$ and $E_n^{+}$ we have
\stepcounter{thm}
\begin{equation}\label{lastsec}
E_{n}\sim (n+1)H-\sum_{h=1}^{n-1}(n-h+1)E_h, \quad E_{n}^{+}\sim (n+1)H^{+}-\sum_{h=1}^{n-1}(n-h+1)E_h^{+}.
\end{equation}
Furthermore, $\Eff(\mathcal{X}(n)_i) = \left\langle E_1,\dots,E_{i},D_{n+1}\right\rangle$, $\Nef(\mathcal{X}(n)_i) = \left\langle D_1,\dots,D_{i+1}\right\rangle$ and similarly $\Eff(\mathcal{Q}(n)_i) = \left\langle E_1^{+},\dots,E_{i}^{+},D_{n+1}^{+}\right\rangle$, $\Nef(\mathcal{Q}(n)_i) = \left\langle D_1^{+},\dots,D_{i+1}^{+}\right\rangle$ for $i = 1,\dots,n-1$. 
\end{Proposition}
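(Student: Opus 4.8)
The plan is to establish first the divisor-class identities \eqref{Dk}, \eqref{Dkq}, \eqref{lastsec}, and then to read off the two cones from the spherical structure provided by Lemma \ref{cb} together with an explicit global generation argument. First I would treat \eqref{Dk}. By definition $D_k$ is the strict transform of the hypersurface $d_k\subset\mathbb P^N$ cut out by the $k\times k$ determinant of \eqref{eqcol}, which has degree $k$, so $f^{\ast}d_k=kH$ and it only remains to find the coefficient of each $E_h$. That coefficient is the order of vanishing $\operatorname{ord}_{E_h}(d_k)$, equal to the multiplicity of $d_k$ at the general point of $\sec_h(\mathcal S)$. A local computation gives this multiplicity as the corank $k-h$ for $h<k$ and $0$ for $h\ge k$: at a general rank-$h$ matrix the submatrix in \eqref{eqcol} also has rank $h$ and its entries are independent local parameters, so $d_k$ looks locally like the determinant of a generic $k\times k$ matrix at a rank-$h$ point, whose multiplicity is $k-h$; moreover the earlier blow-ups $f_1,\dots,f_{h-1}$ are isomorphisms near the general point of $\sec_h(\mathcal S)$, so this number is unchanged along the strict transform. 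This yields \eqref{Dk}, and \eqref{Dkq} follows verbatim in the symmetric setting using $\sec_h(\mathcal V)=\sec_h(\mathcal S)\cap\mathbb P^{N_+}$ and the congruence normal form of a symmetric matrix. When $n=m$, the top center $\sec_n(\mathcal S)=\{\det=0\}$ is the degree-$(n+1)$ determinant hypersurface, which is Cartier; its blow-up is an isomorphism and $E_n$ is its strict transform, so the same corank count $n+1-h$ gives \eqref{lastsec}.

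For the effective cone I would invoke Lemma \ref{cb}: each $\mathcal X(n,m)_i$ is spherical, and the effective cone of a spherical variety is generated by its $\mathscr B$-invariant prime divisors, that is, by the colors and boundary divisors $\{D_1,\dots,D_{n+1}\}\cup\{E_1,\dots,E_i\}$ (with the $n=m$ bookkeeping of Lemma \ref{cb}). It then remains only to discard the redundant colors. Solving \eqref{Dk} for $k=n+1$ gives $(n+1)H=D_{n+1}+\sum_{h=1}^{i}(n+1-h)E_h$; substituting this expression for $H$ back into the class of $D_k$ writes each color $D_k$ with $k\le n$ as a non-negative combination of $D_{n+1}$ and $E_1,\dots,E_i$, the coefficient of every $E_h$ being non-negative precisely because $k\le n$. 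Hence $D_1,\dots,D_n\in\langle E_1,\dots,E_i,D_{n+1}\rangle$ and $\Eff(\mathcal X(n,m)_i)=\langle E_1,\dots,E_i,D_{n+1}\rangle$; the identical substitution in the symmetric case gives $\Eff(\mathcal Q(n)_i)=\langle E_1^{+},\dots,E_i^{+},D_{n+1}^{+}\rangle$.

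For the nef cone, the inclusion $\langle D_1,\dots,D_{i+1}\rangle\subseteq\Nef(\mathcal X(n,m)_i)$ comes from global generation: the $k\times k$ minors of the matrix \eqref{matrix} span a linear system with base locus $\sec_{k-1}(\mathcal S)$, so after blowing up $\sec_1,\dots,\sec_{k-1}$ (possible exactly when $k\le i+1$) the class $D_k$ becomes base-point-free; equivalently $D_k$ is the pull-back of $\mathcal O(1)$ under the $k$-th factor of \eqref{graph}, hence nef. For the reverse inclusion I would argue by induction on $i$, the case $i=0$ being $\Nef(\mathbb P^N)=\mathbb R_{\ge 0}D_1$. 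For the step, the blow-down $f_i\colon\mathcal X(n,m)_i\to\mathcal X(n,m)_{i-1}$ realises $f_i^{\ast}\Nef(\mathcal X(n,m)_{i-1})=\langle D_1,\dots,D_i\rangle$ as a facet of $\Nef(\mathcal X(n,m)_i)$; this facet is supported by the class $\gamma_i$ of a line in a fibre of $E_i\to\sec_i(\mathcal S)$, for which $H\cdot\gamma_i=0$, $E_h\cdot\gamma_i=0$ for $h<i$ and $E_i\cdot\gamma_i=-1$, so that $D_k\cdot\gamma_i=0$ for $k\le i$ and $D_{i+1}\cdot\gamma_i=1$. Since $\rho(\mathcal X(n,m)_i)=i+1$ and $D_{i+1}$ lies strictly on the $\gamma_i$-positive side, the new extremal ray is $\mathbb R_{\ge 0}D_{i+1}$, while the opposite facet is the wall of the contraction defined by the semi-ample class $D_{i+1}$ and the remaining facets are the cones over the facets of $\Nef(\mathcal X(n,m)_{i-1})$ together with $D_{i+1}$. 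This gives $\Nef(\mathcal X(n,m)_i)=\langle D_1,\dots,D_{i+1}\rangle$, and the symmetric computation is identical.

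The delicate point is the reverse inclusion for the nef cone. The curve $\gamma_i$ in the top exceptional divisor is unaffected by further blow-ups and so its intersection numbers are transparent, but to complete the induction one must control the contraction on the $D_{i+1}$-side and, more generally, keep track of how the fibre curves of the earlier $E_h$ interact with the later centers $\sec_l(\mathcal S)$ with $l>h$, which contain $\sec_h(\mathcal S)$; this strict-transform bookkeeping across the whole tower is the main obstacle. The multiplicity computation of the first paragraph is the other technical input, its only subtlety being that the corank count must be checked to persist along the strict transforms of the secant varieties and, in the symmetric case, under restriction to $\mathbb P^{N_+}$.
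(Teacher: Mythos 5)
Your treatment of the class formulas \eqref{Dk}--\eqref{lastsec} and of the effective cone matches the paper's in substance. The paper also writes $D_k=\deg(f_*D_k)H-\sum_h\mult_{\sec_h(\mathcal S)}(f_*D_k)E_h$, obtaining $\deg(f_*D_k)=k$ from the minor and the multiplicities $k-h$ by citing Lemma 3.10 of [Ma18a], where you compute them by hand via the corank of the submatrix at a general rank-$h$ point; and for $\Eff$ the paper likewise invokes the spherical description through Lemma \ref{cb}. Your explicit elimination of the redundant colors $D_1,\dots,D_n$ (the coefficient $h(n+1-k)/(n+1)$ of $E_h$ after substituting for $H$) is a detail the paper leaves implicit, and it is correct and worth having.

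The genuine gap is in the reverse inclusion for the nef cone. Knowing that $f_i^{*}\Nef(\mathcal X(n,m)_{i-1})=\left\langle D_1,\dots,D_i\right\rangle$ is a facet of $\Nef(\mathcal X(n,m)_i)$ supported by $\gamma_i$, and that $D_{i+1}\cdot\gamma_i>0$, does not determine the cone: a full-dimensional cone in $N^1\cong\mathbb R^{i+1}$ is not pinned down by one facet together with one additional extremal ray, and nothing in your argument excludes further extremal rays on the $\gamma_i$-positive side, i.e.\ nef classes outside $\left\langle D_1,\dots,D_{i+1}\right\rangle$. To close the induction you would need, for each $j\le i$, a curve class $C_j$ with $D_k\cdot C_j=0$ for $k\ne j$ and $D_j\cdot C_j>0$, supporting the facet obtained by omitting $D_j$; these are fiber curves of the morphisms induced by the factors of \eqref{graph}, and controlling their intersections with the strict transforms $E_h$ is exactly the ``bookkeeping across the whole tower'' that you flag as the obstacle but do not carry out. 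The paper sidesteps this entirely: it deduces both $\Eff$ and $\Nef$ from the spherical structure, citing Proposition 4.5.4.4 of [ADHL15] and Section 2.6 of [Br89], which describe the effective cone via the $\mathscr{B}$-invariant prime divisors and the nef cone via the colors, so that Lemma \ref{cb} finishes the proof with no curve computations. If you want to keep your more elementary route for $\Nef$, the missing step is the construction of the dual curves $C_1,\dots,C_i$; otherwise the one inclusion you do prove (semi-ampleness of $D_1,\dots,D_{i+1}$ via the resolution of \eqref{graph}) should be paired with Brion's description as in the paper.
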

\begin{proof}
Consider the case $n < m$, a similar argument works for the case $n = m$. Let $f:\mathcal{X}(n,m)\rightarrow\mathbb{P}^N$ be the blow-up morphism in Construction \ref{ccc}. We have 
$$D_k = \deg(f_{*}D_k)H-\sum_{h=1}^{i}\mult_{\sec_h(\mathcal{S})}(f_{*}D_k)E_h$$
Now, since $f_{*}D_k$ is the zero locus of a $k\times k$ minor of the matrix $Z$ then $\deg(f_{*}D_k)=k$. Furthermore, \cite[Lemma 3.10]{Ma18a} yields $\mult_{\sec_h(\mathcal{S})}(f_{*}D_k) = k-h$ if $h = 1,\dots k-1$ and $\mult_{\sec_h(\mathcal{S})}(f_{*}D_k) = 0$ if $h\geq k$, and hence we get (\ref{Dk}).

By \cite[Proposition 4.5.4.4]{ADHL15} and \cite[Section 2.6]{Br89} the generators of the effective and the nef cones are respectively generated by the boundary divisors and the colors. Lemma \ref{cb} then concludes the proof.
\end{proof}

\subsubsection{Generators of $\Cox(\mathcal{X}(n,m)_i)$}
Let $I = \{i_0,\dots, i_k\}$, $J = \{j_0,\dots, j_k\}$ be two ordered sets of indexes with $0\leq i_0\leq\dots\leq i_k\leq n$ and $0\leq j_0\leq\dots\leq j_k\leq m$, and denote by $Z_{I,J}$ the $(k+1)\times (k+1)$ minor of $Z$ built with the rows indexed by $I$ and the columns indexed by $J$. Similarly, let $Z_{I,J}^{+}$ be the $(k+1)\times (k+1)$ minor of the symmetrization of $Z$ built with the rows indexed by $I$ and the columns indexed by $J$.  

\begin{Proposition}\label{gen}
Let $T_{I,J}$ be the canonical section associated to the strict transform of the hypersurface $\{\det(Z_{I,J})=0\}\subset\mathbb{P}^N$, and let $S_j$ be the canonical section associated to the exceptional divisor $E_j$ in Construction \ref{ccc}. Then $\Cox(\mathcal{X}(n,m))_i$ is generated by the $T_{I,J}$ for $1\leq |I|,|J|\leq n+1$ and the $S_j$ for $j=1,\dots,i$. 

Now, consider $\mathcal{Q}(n)_i$. Let $T_{I,J}^{+}$ be the canonical section associated to the strict transform of the hypersurface $\{\det(Z_{I,J}^{+})=0\}\subset\mathbb{P}^{N_{+}}$, and let $S_j^{+}$ be the canonical section associated to the exceptional divisor $E_j^{+}$ in Construction \ref{ccq}. Then $\Cox(\mathcal{Q}(n)_i)$ is generated by the $T_{I,J}^{+}$ for $1\leq |I|,|J|\leq n+1$ and the $S_j^{+}$ for $j=1,\dots,i$. 
\end{Proposition}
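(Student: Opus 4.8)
The plan is to read off the generators from the general structure theory of Cox rings of spherical varieties, once the colors and boundary divisors have been matched with explicit minors. By Lemma \ref{cb}, each $\mathcal{X}(n,m)_i$ is spherical for $\mathscr{G}=SL(n+1)\times SL(m+1)$, with $\mathscr{B}$ the pairs of upper triangular matrices; its boundary divisors are $E_1,\dots,E_i$ (together with $D_{n+1}$ when $n=m$) and its colors are $D_1,\dots,D_{n+1}$ (respectively $D_1,\dots,D_n$ when $n=m$). First I would invoke the description of the Cox ring of a spherical variety from \cite[Section 4.5.4]{ADHL15} (see also \cite[Section 2.6]{Br89}): as a $\mathscr{G}$-algebra, $\Cox(\mathcal{X}(n,m)_i)$ is generated by the canonical sections of its boundary divisors together with the $\mathscr{G}$-submodules generated by the canonical sections of its colors. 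It then suffices to identify these generating data explicitly.

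Because the centers blown up in Construction \ref{ccc} are $\mathscr{G}$-invariant, the exceptional divisors $E_j$ are $\mathscr{G}$-invariant, and since $\mathscr{G}$ carries no nontrivial characters their canonical sections $S_j$ are genuinely $\mathscr{G}$-invariant; thus the boundary divisors contribute precisely $S_1,\dots,S_i$, and, when $n=m$, the invariant section of $D_{n+1}$, which is the determinant $T_{\{0,\dots,n\},\{0,\dots,n\}}$. For a color $D_k$, the description (\ref{eqcol}) shows that its canonical section is the strict transform of the lower-right $k\times k$ minor of $Z$, a $\mathscr{B}$-eigenvector. The key step is then to check that the $\mathscr{G}$-module it generates is the span of all the $T_{I,J}$ with $|I|=|J|=k$: indeed these minors are the entries of the induced map $\wedge^k Z$, so their linear span is an irreducible $\mathscr{G}$-representation, and since any nonzero vector of an irreducible module generates it, the $\mathscr{G}$-module generated by the canonical section of $D_k$ is this entire span. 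Proposition \ref{theff} already records that all these minors carry the class $[D_k]$. Letting $k$ range over the colors and adjoining the $S_j$ therefore yields exactly the $T_{I,J}$ with $1\leq |I|,|J|\leq n+1$ together with $S_1,\dots,S_i$, as claimed.

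For the space of complete quadrics $\mathcal{Q}(n)_i$ I would run the same argument with $\mathscr{G}=SL(n+1)$ acting on $\Sym^2 V$ by $q\mapsto gqg^{t}$. Lemma \ref{cb} supplies the boundary divisors $E_1^{+},\dots,E_i^{+},D_{n+1}^{+}$ and the colors $D_1^{+},\dots,D_n^{+}$; the boundary divisors give the invariant sections $S_1^{+},\dots,S_i^{+}$ and the symmetric determinant, while each color $D_k^{+}$ contributes the $\mathscr{G}$-module generated by the canonical section of the lower-right symmetric $k\times k$ minor, which I would identify with the span of all $T_{I,J}^{+}$ with $|I|=|J|=k$.

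The step I expect to be the main obstacle is the representation-theoretic identification used above: namely, that the $\mathscr{G}$-module generated by the canonical section of a color is exactly the linear span of the corresponding minors, equivalently that these minors lie in a single irreducible $\mathscr{G}$-subrepresentation. In the collineation case this is clean, via the identification of the $k\times k$ minors with the entries of $\wedge^k Z$. In the symmetric case one must establish the analogous irreducibility for the minors of a symmetric matrix and keep track of the strict-transform bookkeeping (matching minors with sections of $\mathcal{O}(D_k^{+})$ of the correct multiplicity along the $E_h^{+}$), which is where I would expect the verification to demand the most care.
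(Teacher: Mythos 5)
Your proposal is correct and follows essentially the same route as the paper: both rest on the structure theorem for Cox rings of spherical varieties from \cite[Theorem 4.5.4.6]{ADHL15} together with the identification of boundary divisors and colors in Lemma \ref{cb}. The only difference is in how the $\mathscr{G}$-module generated by the canonical section of a color is matched with the span of the minors: you argue equality via irreducibility of the representation spanned by the entries of $\wedge^k Z$, while the paper gets the (sufficient) containment by observing that the $\mathscr{G}$-orbit of the distinguished minor stays inside $I(\sec_k(\mathcal{S}))$, whose degree-$(k+1)$ piece is spanned by the $(k+1)\times(k+1)$ minors.
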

\begin{proof}
By \cite[Theorem 4.5.4.6]{ADHL15} if $\mathscr{G}$ is a semi-simple and simply connected algebraic group and $(X,\mathscr{G},\mathscr{B},x_0)$ is a spherical variety with boundary divisors $E_1,\dots,E_r$ and colors $D_1,\dots,D_s$, then $\Cox(X)$ is generated as a $K$-algebra by the canonical sections of the $E_i$'s and the finite dimensional vector subspaces $\lin_{K}(\mathscr{G}\cdot D_i)\subseteq \Cox(X)$ for $1\leq i\leq s$.

Consider the case $n < m$. For $\mathcal{X}(n)_i$ and $\mathcal{Q}(n)_i$ an analogous argument works. By Lemma \ref{cb} we have that $\mathcal{B}(\mathcal{X}(n,m)_i) = \{E_1,\dots,E_i\}$ and $\mathcal{C}(\mathcal{X}(n,m)_i) = \{D_1,\dots,D_{n+1}\}$. Recall that for any $k=0,\dots, n$ the divisor $D_{k+1}$ is the strict transform of the hypersurface in $\mathbb{P}^N$ defined by the determinant of the $(k+1)\times (k+1)$ most right down minor of the matrix $Z$. Let us denote by $Z_{I,J}^{rd}$ such minor. 

Note that $\det(Z_{I,J}^{rd})\in I(\sec_{k}(\mathcal{S}))$. Therefore, considering the action of $\mathscr{G} =  SL(n+1)\times SL(m+1)$ we have that $g\cdot \det(Z_{I,J}^{rd})\in I(\sec_{k}(\mathcal{S}))$ for any $g\in \mathscr{G}$, and hence $\lin_{K}(\mathscr{G}\cdot \det(Z_{I,J}^{rd})) \subseteq I(\sec_{k}(\mathcal{S}))$. To conclude it is enough to recall that $I(\sec_{k}(\mathcal{S}))$ is generated by the $(k+1)\times (k+1)$ minors of $Z$.
\end{proof}

\subsection{The Mori chamber decomposition of the intermediate space of Picard rank three}
In this section, using the techniques introduced in \cite{Ma18a, Ma18b}, we will study the Mori chamber and stable base locus decomposition for the spaces $\mathcal{X}(n)_3$ in Construction \ref{ccc}. 

\begin{Notation}
We will denote by $\left\langle v_1,\dots,v_s\right\rangle$ the cone in $\mathbb{R}^n$ generated by the vectors $v_1,\dots,v_s\in\mathbb{R}^n$. Given two vectors $v_i,v_j$ we set $(v_i,v_j] := \left\langle v_i,v_j\right\rangle\setminus \mathbb{R}_{>0} v_i$ and $(v_i,v_j) := \left\langle v_i,v_j\right\rangle\setminus (\mathbb{R}_{>0} v_i\cup \mathbb{R}_{>0} v_j)$.
\end{Notation}

\begin{Lemma}\label{lemma_mov}
Consider the natural inclusion $i:\mathcal{Q}(n)_i\rightarrow\mathcal{X}(n)_i$. Then $i^{*}\Mov(\mathcal{X}(n)_i)=\Mov(\mathcal{Q}(n)_i)$.
\end{Lemma}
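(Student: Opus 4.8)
The plan is to compute each movable cone directly from the Cox rings of Proposition \ref{gen} and to observe that the two answers are interchanged by $i^*$. I would first make $i^*$ explicit. The embedding is induced by the linear inclusion $\mathbb{P}^{N_+}\hookrightarrow\mathbb{P}^N$, and since $\sec_h(\mathcal{V})=\sec_h(\mathcal{S})\cap\mathbb{P}^{N_+}$ the blow-up centres of Construction \ref{ccq} are the scheme-theoretic restrictions of those of Construction \ref{ccc}; hence $i^*H=H^+$ and $i^*E_h=E_h^+$, so by the multiplicity formulas of Proposition \ref{theff} also $i^*D_k=D_k^+$ and $i^*D_{n+1}=D_{n+1}^+$. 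Thus $i^*$ is a lattice isomorphism carrying every relevant class of $\mathcal{X}(n)_i$ to its symmetric counterpart, and by Proposition \ref{theff} it already matches $\Eff$ and $\Nef$. It remains to prove that $\Mov(\mathcal{X}(n)_i)$ and $\Mov(\mathcal{Q}(n)_i)$ are cut out by the same inequalities in the coordinates $(H,E_1,\dots,E_i)$ and $(H^+,E_1^+,\dots,E_i^+)$.

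For the computation I would invoke the description of the movable cone of a Mori dream space $X$ with $\Cox(X)=K[T_1,\dots,T_s]/I$, where each $V(T_j)$ is a prime divisor of class $w_j=\deg T_j$, namely
\[
\overline{\Mov}(X)=\bigcap_{j=1}^{s}\left\langle w_l\, :\, l\neq j\right\rangle,
\]
valid because $V(T_j)$ is a fixed component of $w$ exactly when $w$ admits no monomial representative avoiding $T_j$, i.e.\ when $w\notin\left\langle w_l: l\neq j\right\rangle$; see \cite[Section 3.3]{ADHL15}. The essential feature of this formula is that it depends only on the generator degrees and their multiplicities: a degree attained by two or more generators yields a vacuous factor, since deleting a single generator leaves that ray in place and the corresponding cone equal to $\Eff(X)$.

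Now I would read the degrees off Proposition \ref{gen} and Lemma \ref{cb}. By \cite[Theorem 4.5.4.6]{ADHL15} the Cox generators split into the canonical section of each boundary divisor, the unique generator of its degree, and, for each color $D_k$, a basis of the module $\lin_{K}(\mathscr{G}\cdot D_k)$ spanned by the $k\times k$ minors, all of class $[D_k]$ and at least two in number. Hence the boundary--color dichotomy of Lemma \ref{cb} is exactly the dichotomy between generator degrees attained once and those attained at least twice, so in the displayed formula only the boundary divisors contribute, and
\[
\overline{\Mov}(\mathcal{X}(n)_i)=\bigcap_{B\in\mathcal{B}(\mathcal{X}(n)_i)}\left\langle \mathcal{C}(\mathcal{X}(n)_i)\cup\big(\mathcal{B}(\mathcal{X}(n)_i)\setminus\{B\}\big)\right\rangle.
\]
The same reasoning gives the analogous formula for $\mathcal{Q}(n)_i$. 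By Lemma \ref{cb} the two varieties have matching boundary divisors and colors, $i^*$ identifies the corresponding classes, and each symmetric color $D_k^+$ again spans a space of symmetric $k\times k$ minors of dimension at least two; so the two intersections correspond under $i^*$, which gives $i^*\Mov(\mathcal{X}(n)_i)=\Mov(\mathcal{Q}(n)_i)$.

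The step needing the most care, and the true content of the lemma, is the multiplicity count: although the Segre and the Veronese have different numbers of $k\times k$ minors and the ambient spaces have different dimensions, the pattern of which generator degrees are attained once and which at least twice must coincide. This is guaranteed by Lemma \ref{cb}, since that pattern is precisely the common boundary--color classification -- one section per boundary divisor, a module of dimension at least two per color -- together with the fact, via the $\mathscr{G}$-action, that all minors of a fixed size share a single divisor class.
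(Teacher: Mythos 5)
Your argument is correct and is essentially the paper's own proof: the paper likewise deduces the lemma from Proposition \ref{gen} (the Cox generators of $\mathcal{Q}(n)_i$ are the pull-backs of those of $\mathcal{X}(n)_i$) combined with the movable-cone formula of \cite[Proposition 3.3.2.3]{ADHL15}. You merely make explicit the identification of classes under $i^*$ and the multiplicity bookkeeping that the paper leaves implicit.
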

\begin{proof}
By Proposition \ref{gen} $\Cox(\mathcal{Q}(n)_i)$ is generated by the pull-backs of the generators of $\Cox(\mathcal{X}(n)_i)$. Hence the statement follows from \cite[Proposition 3.3.2.3]{ADHL15}.   
\end{proof}

\begin{Proposition}\label{mov}
The movable cone of $\mathcal{X}(n)_3$ is generated by $D_1\sim H, D_2\sim 2H-E_1, D_n\sim nH+(1-n)E_1+(2-n)E_2, P_n\sim n(n-1)H+n(2-n)E_1+(n-1)(2-n)E_2$. 

Similarly, $\Mov(\mathcal{Q}(n)_3)$ is generated by $D_1^{+}\sim H^{+}, D_2^{+}\sim 2H^{+}-E_1^{+}, D_n^{+}\sim nH^{+}+(1-n)E_1^{+}+(2-n)E_2^{+}, P_n^{+}\sim n(n-1)H^{+}+n(2-n)E_1^{+}+(n-1)(2-n)E_2^{+}$. 
\end{Proposition}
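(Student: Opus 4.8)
The plan is first to reduce the statement for $\mathcal{Q}(n)_3$ to the one for $\mathcal{X}(n)_3$, and then to read off $\Mov(\mathcal{X}(n)_3)$ from the Cox ring. Lemma \ref{lemma_mov} already gives $i^{*}\Mov(\mathcal{X}(n)_3)=\Mov(\mathcal{Q}(n)_3)$, and the restriction $i^{*}$ sends $H\mapsto H^{+}$ and $E_h\mapsto E_h^{+}$ because $\sec_h(\mathcal{V})=\sec_h(\mathcal{S})\cap\mathbb{P}^{N_{+}}$; hence it sends $D_k\mapsto D_k^{+}$ and $P_n\mapsto P_n^{+}$, so the second assertion follows at once from the first. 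It therefore suffices to compute $\Mov(\mathcal{X}(n)_3)$.

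Since $\mathcal{X}(n)_3$ is spherical, hence a Mori dream space, I would use that its moving cone equals that of its canonical toric ambient $\mathcal{T}$ from Remark \ref{toric} (by the argument of Lemma \ref{lemma_mov}, i.e. \cite[Proposition 3.3.2.3]{ADHL15}), together with the toric description $\Mov(\mathcal{T})=\bigcap_{l}\left\langle w_j:j\neq l\right\rangle$, the intersection over the classes $w_j$ of the generators of $\Cox(\mathcal{X}(n)_3)$ furnished by Proposition \ref{gen}. These classes are $D_1\sim H$ (the entries), $D_s\sim sH-(s-1)E_1-(s-2)E_2$ for $2\le s\le n+1$ (the minors of size $s$), and $E_1,E_2$ (the canonical sections $S_1,S_2$). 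By Proposition \ref{theff} we have $\Eff(\mathcal{X}(n)_3)=\left\langle E_1,E_2,D_{n+1}\right\rangle$, whose extremal rays $E_1,E_2,D_{n+1}$ are each attained by a single generator; deleting any other generator returns the whole effective cone, so only three terms survive the intersection.

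The computation then rests on one structural remark that I would establish first: the minor classes lie on a single affine line, $D_s=s(H-E_1-E_2)+(E_1+2E_2)$, so that $D_2,\dots,D_{n+1}$ form a segment with endpoints $D_2$ and $D_{n+1}$. A routine nonnegativity check then yields
\begin{align*}
\text{omitting } D_{n+1}:\quad & \left\langle E_1,E_2,D_n\right\rangle,\\
\text{omitting } S_1:\quad & \left\langle D_1,D_{n+1},E_2\right\rangle,\\
\text{omitting } S_2:\quad & \left\langle E_1,D_2,D_{n+1}\right\rangle,
\end{align*}
each remaining color $D_s$ being a nonnegative combination of the three rays on the right. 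Intersecting these cones, the new extremal ray $P_n$ appears as the meeting of the facet $\left\langle E_1,D_n\right\rangle$ of the first cone with the facet $\left\langle D_1,D_{n+1}\right\rangle$ of the second; this is exactly the ray defining the flip of $\mathcal{X}(n)_3$.

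To finish I would record the supporting halfspaces of the three cones and check that their common refinement is bounded by precisely four of them, namely the planes through $\left\langle D_1,D_2\right\rangle$, $\left\langle D_2,D_n\right\rangle$, $\left\langle D_n,P_n\right\rangle$ and $\left\langle P_n,D_1\right\rangle$, so that $\Mov(\mathcal{X}(n)_3)=\left\langle D_1,D_2,D_n,P_n\right\rangle$; concretely, one verifies that these four classes satisfy every defining inequality and that the four listed planes are supporting. The main obstacle is exactly this last bookkeeping: confirming that the triple intersection introduces no spurious extremal ray and that $P_n$, rather than some other boundary class, is the genuine fourth generator. With $\Mov(\mathcal{X}(n)_3)$ in hand, applying $i^{*}$ as above gives the asserted generators $D_1^{+},D_2^{+},D_n^{+},P_n^{+}$ of $\Mov(\mathcal{Q}(n)_3)$.
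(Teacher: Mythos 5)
Your proposal is correct and follows essentially the same route as the paper: both apply \cite[Proposition 3.3.2.3]{ADHL15} to the Cox ring generators of Proposition \ref{gen}, observe that only the deletions of $S_1$, $S_2$ and the determinant change the cone, and obtain $P_n$ as the intersection of the plane through $D_1,D_{n+1}$ with the plane through $E_1,D_n$, before invoking Lemma \ref{lemma_mov} for $\mathcal{Q}(n)_3$. Your detour through the canonical toric embedding is unnecessary but harmless, and you actually supply the collinearity of the $D_s$ and the facet bookkeeping that the paper compresses into ``it is immediate to see.''
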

\begin{proof}
We use \cite[Proposition 3.3.2.3]{ADHL15} which gives a way of computing the movable cone starting from the generators of the Cox ring. More specifically, in order to compute the movable cone it is enough to intersect all the cones generated by the sets of vectors obtained by removing just one element from the set of generators of the Cox ring.

In our specific case, using the generators of $\Cox(\mathcal{X}(n)_3)$ in Proposition \ref{gen}, it is immediate to see that this procedure give us the vectors $(1,0,0),(2,-1,0),(n,1-n,2-n)$ and the vector $(n(n-1),n(2-n),(n-1)(2-n))$ that is the intersection of the plane generated by $(1,0,0),(n+1,-n,-n+1)$ and the plane generated by $(0,1,0),(n,1-n,2-n)$. Finally, the statement on $\Mov(\mathcal{Q}(n)_3)$ follows from Lemma \ref{lemma_mov}.
\end{proof}

\begin{thm}\label{MCD_main}
Let $f(n)$ be the number of chambers in the Mori chamber decomposition of $\mathcal{X}(n)_{3}$. Then $f(n+1) = f(n)+n+1$. Furthermore, the decomposition can be described by the following $2$-dimensional cross-section of $\Eff(\mathcal{X}(n)_3)$
$$
\definecolor{wwwwww}{rgb}{0.4,0.4,0.4}
\begin{tikzpicture}[xscale=0.35,yscale=0.40][line cap=round,line join=round,>=triangle 45,x=1cm,y=1cm]
\clip(-14.314487632508836,-15.99593495934956) rectangle (42.233215547703175,9.0);
\fill[line width=0.4pt,color=wwwwww,fill=wwwwww,fill opacity=0.1] (-12,-4) -- (12,-4) -- (0,8) -- cycle;
\fill[line width=0.4pt,fill=black,fill opacity=0.1] (0,4) -- (-6,0) -- (6,0) -- cycle;
\draw [line width=0.4pt,color=wwwwww] (-12,-4)-- (12,-4);
\draw [line width=0.4pt,color=wwwwww] (12,-4)-- (0,8);
\draw [line width=0.4pt,color=wwwwww] (0,8)-- (-12,-4);
\draw [line width=0.4pt] (0,4)-- (-6,0);
\draw [line width=0.4pt] (-6,0)-- (6,0);
\draw [line width=0.4pt] (6,0)-- (0,4);
\draw [line width=0.4pt] (0,4)-- (18,-8);
\draw [line width=0.4pt,dash pattern=on 1pt off 1pt] (18,-8)-- (24,-12);
\draw [line width=0.4pt] (24,-12)-- (30,-16);
\draw [line width=0.4pt] (-6,0)-- (-12,-4);
\draw [line width=0.4pt] (-6,0)-- (0,8);
\draw [line width=0.4pt] (6,0)-- (0,8);
\draw [line width=0.4pt] (6,0)-- (-12,-4);
\draw [line width=0.4pt] (-6,0)-- (12,-4);
\draw [line width=0.4pt] (0,4)-- (0,8);
\draw [line width=0.4pt] (0,8)-- (18,-8);
\draw [line width=0.4pt] (18,-8)-- (-12,-4);
\draw [line width=0.4pt] (18,-8)-- (-6,0);
\draw [line width=0.4pt] (0,8)-- (24,-12);
\draw [line width=0.4pt] (24,-12)-- (-12,-4);
\draw [line width=0.4pt] (-6,0)-- (24,-12);
\draw [line width=0.4pt] (0,8)-- (30,-16);
\draw [line width=0.4pt] (30,-16)-- (-12,-4);
\draw [line width=0.4pt] (30,-16)-- (-6,0);
\begin{scriptsize}
\draw [fill=black] (-12,-4) circle (0.5pt);
\draw[color=black] (-12.2,-3.4) node {$E_1$};
\draw [fill=black] (12,-4) circle (0.5pt);
\draw[color=black] (12.303886925795048,-3.5254065040650344) node {$D_4$};
\draw [fill=black] (0,8) circle (0.5pt);
\draw[color=black] (0.2932862190812683,8.3) node {$E_2$};
\draw [fill=black] (0,4) circle (0.5pt);
\draw[color=black] (0.6,4.3) node {$D_2$};
\draw [fill=black] (-6,0) circle (0.5pt);
\draw[color=black] (-6.3,0.6) node {$D_1$};
\draw [fill=black] (6,0) circle (0.5pt);
\draw[color=black] (6.5,0.4) node {$D_3$};
\draw [fill=black] (18,-8) circle (0.5pt);
\draw[color=black] (17.8,-8.4) node {$D_5$};
\draw [fill=black] (24,-12) circle (0.5pt);
\draw[color=black] (23.4,-12.25) node {$D_n$};
\draw [fill=black] (30,-16) circle (0.5pt);
\draw[color=black] (30.9,-15.6) node {$D_{n+1}$};
\draw [fill=black] (0,-1.3333333333333333) circle (0.5pt);
\draw[color=black] (0.2932862190812683,-0.75) node {$P_3$};
\draw [fill=black] (18,-10.666666666666666) circle (0.5pt);
\draw[color=black] (18.5,-10.4) node {$P_n$};
\end{scriptsize}
\end{tikzpicture}
$$
Furthermore, the same statements hold, by replacing the relevant divisors with their pull-backs via the embedding $i:\mathcal{Q}(n)_3\rightarrow \mathcal{X}(n)_3$, for the intermediate space of quadrics $\mathcal{Q}(n)_3$. 
\end{thm}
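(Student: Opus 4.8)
The plan is to read off the Mori chamber decomposition of $\mathcal{X}(n)_3$ directly from its Cox ring, using the spherical structure from Lemma \ref{cb} and the explicit generators of Proposition \ref{gen}. Since $\mathcal{X}(n)_3$ is a $\mathbb{Q}$-factorial spherical variety it is a Mori dream space, so by the bunch-of-cones formalism its maximal Mori chambers coincide with the GIT chambers $\lambda(w)$ of (\ref{gitch}), each cut out as an intersection of orbit cones. First I would record the $\Pic(\mathcal{X}(n)_3)$-degrees of the generators via Proposition \ref{theff}: a $k\times k$ minor $T_{I,J}$ has class $D_k\sim kH-\sum_{h<k}(k-h)E_h$, and the exceptional section $S_j$ has class $E_j$. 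Because $\Eff$, $\Nef$ and $\Mov$ are already pinned down by Propositions \ref{theff} and \ref{mov}, the remaining task is to subdivide $\Eff(\mathcal{X}(n)_3)$ into the cones $\mathcal{C}_i=\langle g_i^{*}(\Nef(Y_i)),\Exc(g_i)\rangle$ of Definition \ref{MCD}.

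Second, I would enumerate the orbit cones. An $\mathfrak F$-face of the positive orthant records a set of generators that vanish simultaneously on $\overline{\mathcal{X}(n)_3}=\Spec\Cox(\mathcal{X}(n)_3)$; since the relations among the minors are governed by the defining ideals of the secant varieties $\sec_h(\mathcal{S})$, the admissible vanishing patterns are dictated by the rank stratification of matrices. Pushing these patterns through the degree map and intersecting the resulting cones as in (\ref{gitch}) produces the walls. The internal walls lying in $\overline{\Mov}$ separate the nef cones of the small $\mathbb{Q}$-factorial modifications of $\mathcal{X}(n)_3$ and should appear in the figure as the edges emanating from $D_1$ and $E_2$ toward the rays $D_4,\dots,D_{n+1}$ and the interior points $P_3,\dots,P_n$; the region $\Eff\setminus\overline{\Mov}$ is then organized, following Definition \ref{MCD}, by which boundary and color divisors get contracted. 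Matching this combinatorial output against the displayed cross-section is a finite verification.

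Third, for the recursion $f(n+1)=f(n)+n+1$ I would induct on $n$. Passing from $\mathcal{X}(n)_3$ to $\mathcal{X}(n+1)_3$ does not disturb the generators of size at most three, hence leaves the inner part of the decomposition near $\Nef$ unchanged; it only lengthens the fan of rays $D_4,\dots,D_{n+1}$ by the new extreme ray $D_{n+2}$ and inserts the new interior point $P_{n+1}$. The chambers adjacent to $D_{n+2}$ and $P_{n+1}$ are the only new ones, and a direct bijection should show there are exactly $n+1$ of them. The main obstacle, in my view, is precisely this step: one must prove uniformly in $n$ that distinct GIT chambers never accidentally coalesce---so that $\MCD(\mathcal{X}(n)_3)$ is genuinely the asserted refinement of $\SBLD(\mathcal{X}(n)_3)$ and not something coarser---and that the freshly created chambers number exactly $n+1$ rather than merely at most $n+1$. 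This is where the explicit relations of Proposition \ref{gen} and a wall-crossing analysis in the spirit of \cite{Ma18a,Ma18b} are indispensable.

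Finally, the transfer to $\mathcal{Q}(n)_3$ is formal once the collineation case is settled. By Proposition \ref{gen} the ring $\Cox(\mathcal{Q}(n)_3)$ is generated by the pull-backs $T_{I,J}^{+}=i^{*}T_{I,J}$ and $S_j^{+}=i^{*}S_j$ of the generators of $\Cox(\mathcal{X}(n)_3)$, while Theorem A identifies $\Eff$ and $\Nef$ and Lemma \ref{lemma_mov} identifies $\Mov$ under $i^{*}$. Hence $i^{*}$ carries the grading data, the orbit cones and the GIT chambers of $\mathcal{X}(n)_3$ bijectively onto those of $\mathcal{Q}(n)_3$, so the Mori chamber decomposition and the count $f(n)$ pull back verbatim, yielding the same figure for the space of quadrics.
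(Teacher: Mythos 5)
Your framework is the right one (Mori chambers of a Mori dream space as GIT chambers cut out by orbit cones, degrees of the Cox generators from Propositions \ref{theff} and \ref{gen}, induction on $n$, and the formal transfer to $\mathcal{Q}(n)_3$ via Lemma \ref{lemma_mov}), and your last paragraph on the quadrics case matches the paper. But the proposal has a genuine gap, and you flag it yourself: the entire content of the theorem is the exact chamber count, i.e.\ that distinct candidate chambers do not coalesce and that exactly $n+1$ new ones appear at each step. You write that this ``should'' follow from a wall-crossing analysis that is ``indispensable'' but you never carry it out; enumerating the $\mathfrak F$-faces and orbit cones uniformly in $n$ is precisely the part that is hard to do directly, and ``matching against the figure is a finite verification'' is not available when $n$ is arbitrary.

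The paper closes this gap by a sandwich argument rather than by a direct orbit-cone computation. For the upper bound it passes to the canonical toric embedding $\mathcal{T}_{\mathcal{X}(n)_3}$ of Remark \ref{toric}, whose Gelfand--Kapranov--Zelevinsky decomposition is computable from the degrees of the Cox generators and refines $\MCD(\mathcal{X}(n)_3)$; relative to the decomposition for $n-1$ this adds only the segments $[E_1,D_{n+1}]$, $[E_2,D_{n+1}]$ and $[D_1,D_{n+1}]$. For the lower bound it uses three facts: a general $\mathbb{P}^{n-1}\times\mathbb{P}^{n-1}\subset\mathbb{P}^{n}\times\mathbb{P}^{n}$ induces an embedding $\mathcal{X}(n-1)_3\hookrightarrow\mathcal{X}(n)_3$ compatible with the Picard and Cox generators, so every chamber of $\MCD(\mathcal{X}(n-1)_3)$ persists; the segments $[E_1,D_{n+1}]$ and $[E_2,D_{n+1}]$ are external walls of $\Eff(\mathcal{X}(n)_3)$ by Proposition \ref{theff}; and $[D_1,D_{n+1}]$ must be a wall because $P_n$ is an extremal ray of $\Mov(\mathcal{X}(n)_3)$ by Proposition \ref{mov}. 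The two bounds coincide, which forces the decomposition and yields the count $f(n)=f(n-1)+(n-2)+2+1$, with the base case $n=3$ imported from \cite[Theorem 6.11]{Ma18a} (your proposal also lacks an explicit base case). Without some substitute for this squeeze --- or an actual uniform computation of the orbit cones --- your argument establishes only that the pictured fan is a candidate refinement, not that it is $\MCD(\mathcal{X}(n)_3)$.
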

\begin{proof} 
By \cite[Theorem 6.11]{Ma18a} the statement holds for $n = 3$. Consider the case $n = 4$. Then the Cox ring of $\mathcal{X}(4)_3$ has additional generators belonging to the class of $D_5$.

Now, let $\mathcal{T}_{\mathcal{X}(4)_3}$ be a projective toric variety as in Remark \ref{toric}. Then there is an embedding $i:\mathcal{X}(4)_3\rightarrow\mathcal{T}_{\mathcal{X}(4)_3}$ such that $i^{*}:\Pic(\mathcal{T}_{\mathcal{X}(4)_3})\rightarrow \Pic(\mathcal{X}(4)_3)$ is an isomorphism inducing an isomorphism $\Eff(\mathcal{T}_{\mathcal{X}(4)_3})\rightarrow \Eff(\mathcal{X}(4)_3)$. 

Since $\mathcal{T}_{\mathcal{X}(4)_3}$ is toric, the Mori chamber decomposition of $\Eff(\mathcal{T}_{\mathcal{X}(4)_3})$ can be computed by means of the Gelfand-Kapranov-Zelevinsky
decomposition \cite[Section 2.2.2]{ADHL15}. Roughly speaking such decomposition is given by all the convex cones we can construct using all the generators of $\Cox(\mathcal{T}_{\mathcal{X}(4)_3})$. Starting from the decomposition for $\mathcal{T}_{\mathcal{X}(4)_3}$ we are allowed to add just two more segments, namely $[E_2,D_5], [E_1,D_5]$ which introduce a new chamber each, and $[D_1,D_5]$ which gives rise two three new chambers by dividing $\left\langle D_1,D_3,E_1\right\rangle, \left\langle E_1,D_3,D_4\right\rangle, \left\langle E_1,D_4,D_5\right\rangle$. Note that we get exactly $f(4) = f(3)+4+1 = 9+4+1 = 14$ chambers. On the other hand, a priori this is just a refinement of the Mori chamber decomposition of $\Eff(\mathcal{X}(4)_3)$. We will prove that it is indeed the Mori chamber decomposition. 

Note that by considering a general $\mathbb{P}^3\times \mathbb{P}^3\subset\mathbb{P}^4\times \mathbb{P}^4$ we get a natural embedding $j:\mathcal{X}(3)_3\rightarrow \mathcal{X}(4)_3$. Furthermore, this embedding preserves the generators of the Picard groups in Lemma \ref{cb} and the generators of the Cox rings in Proposition \ref{gen}. Therefore, all the chambers of $\MCD(\mathcal{X}(3)_3)$ appear in $\MCD(\mathcal{X}(4)_3)$. Furthermore, the segments $[E_2,D_5], [E_1,D_5]$ must appear in $\MCD(\mathcal{X}(4)_3)$ simply because by Proposition \ref{theff} they define two external walls of $\Eff(\mathcal{X}(4)_3)$, and the segment $[D_1,D_5]$ must appear as well since it is needed to intercept the vector $P_4$ which by Proposition \ref{mov} is an extremal ray of $\Mov(\mathcal{X}(4)_3)$.

Now, assuming that the statement holds for $\mathcal{X}(n-1)_3$ we can prove it for $\mathcal{X}(n)_3$ arguing exactly as we did in order to pass from $\mathcal{X}(3)_3$ to $\mathcal{X}(4)_3$. Indeed, arguing exactly as in the previous part of the proof we see that all the chambers of $\MCD(\mathcal{X}(n-1)_3)$ must appear in $\MCD(\mathcal{X}(n)_3)$. Moreover, we have to add the segments $[E_1,D_{n+1}],[E_2,D_{n+1}]$ which by Proposition \ref{theff} give external walls of $\Eff(\mathcal{X}(n)_3)$, and $[D_1,D_{n+1}]$ needed to intercept the vectors $P_n$ which by Proposition \ref{mov} is an extremal ray of $\Mov(\mathcal{X}(n)_3)$.

Finally, note that $n-2$ of the chambers in $\mathcal{X}(n-1)_3$ are subdivided in two chambers each by $[D_1,D_{n+1}]$, the new cone $\left\langle E_1,D_n,D_{n+1}\right\rangle$ also is subdivided in two chambers each by $[D_1,D_{n+1}]$, and we must add also the chamber $\left\langle E_2,D_n,D_{n+1}\right\rangle$. Hence the number of chambers of $\Eff(\mathcal{X}(n)_3)$ is given by $f(n) = f(n-1)+n-2+2+1 = f(n-1)+n+1$.
\end{proof}

\begin{Remark}\label{rem_3}
By \cite[Theorem 6.11]{Ma18a} $\mathcal{X}(3)_3$ has nine Mori chambers but just eight stable base locus chambers. Indeed, within the non-convex chamber determined by the divisors $D_1,D_2,D_3,E_2$ the stable base locus is $E_2$. Note that this last fact holds true more generally for the space $\mathcal{X}(n)_3$ for $n\geq 3$. Similarly, this holds also for the spaces of quadrics $\mathcal{Q}(n)_3$.   
\end{Remark}

\begin{Corollary}\label{CQ3}
The varieties $\mathcal{X}(n)_i$ and $\mathcal{Q}(n)_i$ are Lefschetz divisorially equivalent, and moreover $\mathcal{X}(n)_3$ and $\mathcal{Q}(n)_3$ are birational twins for any $n\geq 1$. Furthermore, $\mathcal{X}(3)_3$ and $\mathcal{Q}(3)_3$ are strong birational twins.
\end{Corollary}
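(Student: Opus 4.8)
The plan is to assemble the statement from the cone- and chamber-level results already proved, treating the three assertions in order of increasing strength. First I would establish Lefschetz divisorial equivalence for every $i$. Writing $i\colon\mathcal{Q}(n)_i\hookrightarrow\mathcal{X}(n)_i$ for the natural embedding, the identity $\sec_h(\mathcal{V})=\sec_h(\mathcal{S})\cap\mathbb{P}^{N_{+}}$ shows that the exceptional divisor over $\sec_j(\mathcal{S})$ restricts to the one over $\sec_j(\mathcal{V})$, so $i^{*}H=H^{+}$ and $i^{*}E_j=E_j^{+}$; by Lemma \ref{cb} these classes form bases of the two Picard groups, whence $i^{*}\colon\Pic(\mathcal{X}(n)_i)\to\Pic(\mathcal{Q}(n)_i)$ is an isomorphism. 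Proposition \ref{theff} then supplies explicit generators of the effective and nef cones on both sides, and since the formulas for $D_k$, $D_{n+1}$ and $E_n$ take the same shape in the symmetric and non-symmetric settings, $i^{*}$ sends each generator $D_k,D_{n+1}$ of $\mathcal{X}(n)_i$ to the corresponding generator $D_k^{+},D_{n+1}^{+}$ of $\mathcal{Q}(n)_i$. Thus $i^{*}\Eff(\mathcal{X}(n)_i)=\Eff(\mathcal{Q}(n)_i)$ and $i^{*}\Nef(\mathcal{X}(n)_i)=\Nef(\mathcal{Q}(n)_i)$, while the movable cones coincide by Lemma \ref{lemma_mov}; this yields the first assertion.

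Next I would upgrade to birational twins for $i=3$. Both spaces are smooth (hence $\mathbb{Q}$-factorial), projective, rational (so $h^1(\mathcal{O})=0$) and, by Lemma \ref{cb}, spherical, so they are Mori dream spaces by Brion's theorem \cite{Br93}. It then suffices to match Mori chamber decompositions, which is precisely the content of Theorem \ref{MCD_main}: the decomposition of $\Eff(\mathcal{X}(n)_3)$ is the displayed planar cross-section, and the same description holds for $\mathcal{Q}(n)_3$ after replacing each divisor by its $i^{*}$-image. Since $i^{*}$ identifies the corresponding rays, $i^{*}\MCD(\mathcal{X}(n)_3)=\MCD(\mathcal{Q}(n)_3)$, and combined with the previous paragraph this gives the birational-twin property for all $n$ (the cases $n\le 3$ reduce either to the base case of Theorem \ref{MCD_main} or to direct inspection of the lower-dimensional spaces, where $\mathcal{X}(n)=\mathcal{X}(n)_3$).

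Finally, for the strong statement I would compare stable base locus decompositions when $n=3$. By Remark \ref{rem_3}, both $\mathcal{X}(3)_3$ and $\mathcal{Q}(3)_3$ have nine Mori chambers but only eight stable base locus chambers, the single coarsening being the merger of the chambers inside the non-convex region bounded by $D_1,D_2,D_3,E_2$ (respectively $D_1^{+},D_2^{+},D_3^{+},E_2^{+}$), on which the stable base locus is $E_2$ (respectively $E_2^{+}$). As $i^{*}$ identifies these rays and the coarsening is the same on both sides, $i^{*}\SBLD(\mathcal{X}(3)_3)=\SBLD(\mathcal{Q}(3)_3)$, so $\mathcal{X}(3)_3$ and $\mathcal{Q}(3)_3$ are strong birational twins.

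The genuinely non-formal input underlying all three steps is Proposition \ref{gen}, which shows that $\Cox(\mathcal{Q}(n)_i)$ is generated by the $i^{*}$-images of the generators of $\Cox(\mathcal{X}(n)_i)$; this is what forces the movable cones and the Mori chamber data to agree, and once it is in hand the corollary is essentially an assembly. The main obstacle, and the reason the strong statement is claimed only for $n=3$, is that it requires the complete stable base locus decomposition: this is available in full only for $n=3$, whereas for larger $n$ one has the Mori chamber decomposition together with the partial information of Remark \ref{rem_3}, so deciding the strong property in general would demand further stable-base-locus computations.
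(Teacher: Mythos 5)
Your proposal is correct and follows exactly the same route as the paper, which proves the corollary by citing Proposition \ref{theff} and Lemma \ref{lemma_mov} for the Lefschetz divisorial equivalence, Theorem \ref{MCD_main} for the birational-twin property of the third intermediate spaces, and Remark \ref{rem_3} for the strong statement when $n=3$. Your write-up merely makes explicit the bookkeeping (the identification $i^{*}H=H^{+}$, $i^{*}E_j=E_j^{+}$, and the Mori dream space property via sphericity) that the paper leaves implicit.
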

\begin{proof}
The first statement follows from Proposition \ref{theff} and Lemma \ref{lemma_mov}. The second statement follows then from Theorem \ref{MCD_main}. Finally, the third statement is a consequence of Remark \ref{rem_3}.   
\end{proof}

\section{The flip of the spaces of complete quadric surfaces}\label{sec_flip}

Theorem \ref{MCD_main} and Corollary \ref{CQ3} imply that each of the spaces $\mathcal{X}(3),\mathcal{Q}(3)$ of complete collineations of $\P^3$ and complete quadric surfaces, admits a single flip. Moreover, such flips are strongly related to each other. Indeed, by computing the Mori chamber decomposition in both cases, we observe that $i^{*}\mathrm{MCD}(\mathcal{X}(3)) = \mathrm{MCD}(\mathcal{Q}(3))$, thus these two spaces are birational twins and the unique flip on $\mathcal{X}(3)$ induces that of $\mathcal{Q}(3)$.

In this section we exhibit the flip of $\mathcal{Q}(3)$, denoted by $\mathcal{Q}(3)^{+}$, the space of complete quadric surfaces following \cite{Ce15}. Towards the end of the section, we conjecture the geometry of the flip of $\mathcal{X}(3)$. We construct $\mathcal{Q}(3)^{+}$ by analyzing a $\ZZ/2$-action on the following Hilbert scheme.

\begin{Proposition}
Let $\mathbf{Hilb}=\mathbf{Hilb}^{2x+1}(\mathbb{G}(1,3))$ denote the Hilbert scheme parametrizing subschemes of $\mathbb{G}(1,3)\subset \P^5$ whose Hilbert polynomial is $P(x)=2x+1$. This space is isomorphic to the following blow-up 
$$\mathbf{Hilb}\cong Bl_{\mathbb{OG}}\mathbb{G}(2,5)$$ 
where $\mathbb{O G}\subset \GG(2,5)$ denotes the orthogonal Grassmannian inside the Grassmannian of $2$-planes in $\P^5$.
\end{Proposition}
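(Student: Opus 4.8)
The plan is to realize $\mathbf{Hilb}$ as the closure of the graph of a rational section of a projective bundle over $\GG(2,5)$, and then to invoke the standard identification of such a graph closure with the blow-up along the zero locus of a \emph{regular} section. Recall first that $\mathbb{G}(1,3)\subset\P^5$ is the smooth Klein quadric $Q$ (a quadric fourfold). A subscheme $C\subset Q$ with Hilbert polynomial $2x+1$ is a one-dimensional scheme of degree $2$ and arithmetic genus $0$, hence connected; such a scheme is a (possibly degenerate) conic, spans a unique plane $\Lambda_C\in\GG(2,5)$, and is a degree-two divisor in that plane. Here I would check that the degenerate cases behave well: coplanar line pairs and double lines still span a plane, whereas skew lines are excluded since they have Hilbert polynomial $2x+2$. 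Sending a conic to its spanning plane defines a morphism $\pi\colon\mathbf{Hilb}\to\GG(2,5)$. Over $\Lambda\notin\mathbb{OG}$, i.e. $\Lambda\not\subset Q$, the intersection $\Lambda\cap Q$ is itself a conic and any conic on $Q$ lying in $\Lambda$ equals it, so $\pi$ has a single reduced point there; over $\Lambda\in\mathbb{OG}$, i.e. $\Lambda\subset Q$, every conic of $\Lambda$ lies on $Q$ and the fibre is the full linear system $|\mathcal{O}_\Lambda(2)|\cong\P^5$.

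Next I would exhibit $\mathbb{OG}$ as the zero scheme of a section. Let $\mathcal{U}$ be the tautological rank-three subbundle on $\GG(2,5)=\GG(3,6)$, so the fibre of $\P(\Sym^2\mathcal{U}^{\vee})$ over $\Lambda$ is precisely the $\P^5$ of conics in $\Lambda$. The quadratic form $q$ defining $Q$ restricts plane by plane to a global section $s\in H^0(\GG(2,5),\Sym^2\mathcal{U}^{\vee})$, with $s(\Lambda)=q|_{\Lambda}$; this vanishes exactly when $\Lambda$ is isotropic, so $Z(s)=\mathbb{OG}$. Since $\mathbb{OG}$ is the disjoint union of the two spinor $\P^3$'s, it has codimension $9-3=6$ in $\GG(2,5)$, equal to the rank of $\Sym^2\mathcal{U}^{\vee}$; thus $s$ cuts out $\mathbb{OG}$ in the expected codimension and is Koszul-regular.

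I would then glue these pieces. The conic $\Lambda\cap Q$ is the conic defined by $s(\Lambda)$, so over $\GG(2,5)\setminus\mathbb{OG}$ the morphism $\pi$ is inverse to the section $[s]\colon \GG(2,5)\setminus\mathbb{OG}\to\P(\Sym^2\mathcal{U}^{\vee})$. As each conic on $Q$ is in particular a conic in its own plane, $\mathbf{Hilb}$ embeds into $\P(\Sym^2\mathcal{U}^{\vee})$ and coincides with the closure of the image of $[s]$. Dualizing $s\colon\mathcal{O}\to\Sym^2\mathcal{U}^{\vee}$ yields a surjection $\Sym^2\mathcal{U}\twoheadrightarrow I_{\mathbb{OG}}$, and because $s$ is regular the ideal $I_{\mathbb{OG}}$ is of linear type, so its Rees algebra agrees with its symmetric algebra. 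Hence $\Bl_{\mathbb{OG}}\GG(2,5)=\Proj\!\big(\bigoplus_{n}I_{\mathbb{OG}}^{\,n}\big)$ is exactly the subscheme of $\P(\Sym^2\mathcal{U}^{\vee})$ determined by this surjection, namely the graph closure, which is $\mathbf{Hilb}$; this gives $\mathbf{Hilb}\cong\Bl_{\mathbb{OG}}\GG(2,5)$.

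The hard part will be twofold. First, one must verify that $s$ is transverse to the zero section along $\mathbb{OG}$, so that $Z(s)=\mathbb{OG}$ holds scheme-theoretically and reducedly; concretely this amounts to the surjectivity of the differential of the restriction map $q\mapsto q|_{\Lambda}$, which I would check in local coordinates on $\GG(2,5)$ using that $q$ is nondegenerate. Second, one must confirm that the scheme structure of $\mathbf{Hilb}$ genuinely matches that of the graph closure, rather than merely agreeing pointwise; I would settle this by producing mutually inverse morphisms from the universal properties of the Hilbert scheme and of the projective bundle $\P(\Sym^2\mathcal{U}^{\vee})$, using the flatness of the universal conic over $\mathbf{Hilb}$. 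Once these points are secured, the smoothness and irreducibility of $\mathbf{Hilb}$ follow formally, since blowing up the smooth center $\mathbb{OG}$ inside the smooth variety $\GG(2,5)$ produces a smooth, irreducible ninefold.
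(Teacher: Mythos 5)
Your proof is correct in outline and reaches the same geometric kernel as the paper --- a subscheme with Hilbert polynomial $2x+1$ is a plane conic, the spanning plane gives the correspondence with $\GG(2,5)$, and the locus where it degenerates is exactly the isotropic planes $\mathbb{OG}$ --- but the mechanism by which you identify $\mathbf{Hilb}$ with the blow-up is genuinely different. The paper goes in the opposite direction: it defines a rational map $f\colon\GG(2,5)\dashrightarrow\mathbf{Hilb}$ sending a plane $\langle L_1,L_2,L_3\rangle$ to the ideal $\langle L_1,L_2,L_3\rangle+\langle F\rangle$, resolves it on $\Bl_{\mathbb{OG}}\GG(2,5)$, checks that the induced morphism $\tilde f$ is birational and bijective, and concludes by Zariski's Main Theorem. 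You instead package the whole situation into the projective bundle $\P(\Sym^2\mathcal{U}^{\vee})$, exhibit $\mathbb{OG}$ as the zero scheme of the regular section $s=q|_{\mathcal{U}}$ (with the codimension count $6=\operatorname{rk}\Sym^2\mathcal{U}^{\vee}$ doing the work), and invoke the linear-type identification of the blow-up with the graph closure of $[s]$. Each approach buys something: the paper's is shorter, but its appeal to Zariski's Main Theorem silently requires $\mathbf{Hilb}$ to be normal, which is never verified (it is true --- the Hilbert scheme of conics on a smooth quadric fourfold is smooth --- but it is a real hypothesis of the theorem being used); your route replaces that with the regular-section machinery and isolates the two genuine residual obligations honestly, namely the transversality of $s$ along $\mathbb{OG}$ and the agreement of the scheme structure of $\mathbf{Hilb}$ with the graph closure, for which your plan via the universal properties of the Hilbert scheme and of $\P(\Sym^2\mathcal{U}^{\vee})$ is the right one. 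Neither of these is a gap in the sense of a step that would fail; they are the same reducedness/normality issue that the paper's argument also needs and does not address, so your version is, if anything, the more complete account.
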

\begin{proof}
Observe that a generic smooth curve with Hilbert polynomial $P(x)=2x+1$ in $\P^5$ is a plane conic $C$. Thus, its ideal $I_C\subset k[p_0,...,p_5]$ is generated by a quadric $F$ and three independent linear forms $L_1,L_2,L_3$. Since $C\subset \mathbb{G}=\mathbb{G}(1,3)$, the equation $F$ is the quadric corresponding to the Grassmannian $\GG \subset \P^5$ under the Pl\"{u}cker embedding.
This description gives rise to a rational map $$f:\GG(2,5)\dashrightarrow \mathbf{Hilb} $$ by assigning the $2$-plane $P$ defined by the independent linear forms $(L_1,L_2,L_3)$ to the ideal $\langle L_1,L_2,L_3\rangle+\langle F\rangle \subset k[p_0,...,p_5]$. 
Observe that the exceptional locus of $f$ consists of planes in $\P^5$ such that there is a containment of ideals $\langle F\rangle \subset \langle L_1,L_2,L_3\rangle$, that is planes $P$ which are contained in the quadric $\GG \subset \P^5$. The locus parametrizing such planes is exactly the orthogonal Grassmannian $\mathbb{OG}$. Now, we resolve the rational map $f$,
  \[
  \begin{tikzpicture}[xscale=2.8,yscale=-1.4]
    \node (A0_0) at (0, 0) {$Bl_{\mathbb{OG}}\mathbb{G}(2,5)$};
    \node (A1_0) at (0, 1) {$\mathbb{G}(2,5)$};
    \node (A1_1) at (1, 1) {$\textbf{Hilb}$};
    \path (A1_0) edge [->,dashed]node [auto] {$\scriptstyle{f}$} (A1_1);
    \path (A0_0) edge [->,swap]node [auto] {$\scriptstyle{\pi}$} (A1_0);
    \path (A0_0) edge [->]node [auto] {$\scriptstyle{\widetilde{f}}$} (A1_1);
  \end{tikzpicture}
  \]
The morphism $\tilde f$ is an isomorphism. Indeed, the rational map $f$ is birational as it has an inverse morphism $g:\mathbf{Hilb}\rightarrow \GG(2,5)$ defined as follows: let $[C]\in \mathbf{Hilb}$ be a point, then the ideal $I(C)=(F)+ (\mbox{plane})\overset{g}{\mapsto}(\mbox{plane})\in \GG(2,5)$. It is clear that $f \circ g=Id$, hence $f$, and consequently $\tilde f$, is birational. Furthermore, $\tilde f$ is a bijection. Indeed, since the exceptional divisor $E\subset Bl_{\mathbb{OG}}\GG(2,5)$ is a $\P^5$-bundle over $\mathbb{OG}$, whose points can be thought of as pairs $(P,C)$, where $P\subset \P^5$ is a $2$-plane and $C\subset P$ is a plane conic. Thus,  Zariski's Main Theorem implies that $\tilde f$ is an isomorphism.
\end{proof}
\begin{Corollary}
Let $\mathbf{Hilb}$ be as above, then $\mathrm{Pic}(\mathbf{Hilb})\cong \langle H^{+},E_2^{+},E_{1,1}^{+}\rangle$ where $H^{+}$ is the pullback of the unique generator of the group $A^1(\GG(2,5))$ and the $E^{+}$'s are the exceptional divisors of the blow-up.
\end{Corollary}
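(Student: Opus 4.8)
The plan is to combine the isomorphism $\mathbf{Hilb}\cong Bl_{\mathbb{OG}}\GG(2,5)$ established in the previous proposition with the standard behaviour of the Picard group under a blow-up along a smooth center. Recall that if $\pi\colon Bl_Z Y\to Y$ is the blow-up of a smooth projective variety $Y$ along a smooth center $Z$ whose irreducible components are $Z_1,\dots,Z_k$, then $\Pic(Bl_Z Y)\cong \pi^{*}\Pic(Y)\oplus\bigoplus_{j=1}^{k}\ZZ[E_j]$, where $E_j$ is the exceptional divisor lying over $Z_j$. Thus the computation reduces to two inputs: the Picard group of $\GG(2,5)$, and the number of irreducible components of $\mathbb{OG}$.

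First I would record that $\GG(2,5)$ is a rational homogeneous space, so that $\Pic(\GG(2,5))\cong A^1(\GG(2,5))\cong\ZZ$, generated by the Schubert divisor, which coincides with the Pl\"ucker hyperplane class; its pullback is the class denoted $H^{+}$. Since this group is free and no torsion can be introduced by blowing up a smooth center, the resulting Picard group will again be free, and its rank equals $1+k$, where $k$ is the number of components of the center.

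The main point, and the step I expect to be the genuine obstacle, is to show that $\mathbb{OG}$ has exactly two irreducible components. By construction $\mathbb{OG}$ parametrizes the $2$-planes of $\P^5$ contained in the smooth Pl\"ucker quadric $\GG(1,3)\subset\P^5$. Fixing a nondegenerate quadratic form on $K^6$ cutting out this quadric fourfold, such $2$-planes correspond bijectively to the maximal ($3$-dimensional) isotropic subspaces of $K^6$, that is, to points of the orthogonal Grassmannian $OG(3,6)$. By the classical theory of maximal isotropic subspaces of an even-dimensional nondegenerate quadratic space (type $D_3$), this variety has precisely two connected components, the two spinor varieties; since $D_3\cong A_3$ each of them is smooth and isomorphic to $\P^3$. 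Hence $\mathbb{OG}=\mathbb{OG}^{+}\sqcup\mathbb{OG}^{-}$ is a disjoint union of two smooth irreducible threefolds, each of codimension $6$ in the nine-dimensional $\GG(2,5)$, in agreement with the fact that the exceptional divisor is a $\P^5$-bundle over $\mathbb{OG}$.

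Putting these facts together, the blow-up formula gives two exceptional divisors, which I label $E_2^{+}$ and $E_{1,1}^{+}$ according to the two families, and yields $\Pic(\mathbf{Hilb})\cong\ZZ H^{+}\oplus\ZZ E_2^{+}\oplus\ZZ E_{1,1}^{+}=\langle H^{+},E_2^{+},E_{1,1}^{+}\rangle$, as claimed. The only delicate ingredient is the identification of the two components of $\mathbb{OG}$ with the two spinor varieties of the quadric fourfold; everything else is formal.
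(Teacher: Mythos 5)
Your proposal is correct and follows essentially the same route as the paper, which simply invokes the blow-up description $\mathbf{Hilb}\cong Bl_{\mathbb{OG}}\GG(2,5)$ and the fact that $\mathbb{OG}$ has two components; you have merely supplied the standard justifications (the blow-up formula for Picard groups and the identification of the two components with the spinor varieties of the quadric fourfold) that the paper leaves implicit.
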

\begin{proof}
The orthogonal Grassmannian $\mathbb{OG}$ has two components, hence the result follows.
\end{proof}

If the base field $K$ is algebraically closed, then for a given smooth quadric $Q\subset \P^3$, the Fano variety of lines $F_1(Q)\subset \GG(1,3)$ consists of two smooth conics. We get a $\ZZ/2$-action on $\mathbf{Hilb}^{2x+1}(\GG(1,3))$ by exchanging such conics. 

\begin{Lemma}\label{ACT}
There is a nontrivial globally defined $\ZZ/2$-action on $\mathbf{Hilb}^{2x+1}(\GG(1,3))$. 
\end{Lemma}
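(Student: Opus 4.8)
The plan is to exhibit the $\ZZ/2$-action explicitly and check that it is globally defined and nontrivial. First I would recall the geometric source of the involution: over the locus of smooth quadrics $Q\subset\P^3$, the Fano variety of lines $F_1(Q)\subset\GG(1,3)$ is a disjoint union of two smooth plane conics $C_1\sqcup C_2$ (the two rulings of $Q$), and swapping $C_1\leftrightarrow C_2$ defines an involution on the open set of $\mathbf{Hilb}^{2x+1}(\GG(1,3))$ parametrizing such pairs. The first task is to promote this set-theoretic swap on an open dense subset to a genuine morphism $\tau$ of the whole scheme $\mathbf{Hilb}$. Here I would use the isomorphism $\mathbf{Hilb}\cong Bl_{\mathbb{OG}}\GG(2,5)$ from the preceding Proposition: a point of $\GG(2,5)$ is a $2$-plane $P\subset\P^5$, and a point of $\mathbf{Hilb}$ encodes the conic $C=P\cap\GG$ cut out on the Pl\"ucker quadric $\GG$ by $P$. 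The two rulings of the quadric $Q$ correspond to the two conics $C$ and $C'$ obtained by intersecting $\GG$ with the two $2$-planes $P,P'$ spanned by the two rulings, and these two planes are interchanged by an algebraic correspondence (each is determined by $Q$, hence by the other). So the involution is induced by the map $P\mapsto P'$ on $\GG(2,5)$.

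The main step is therefore to realize this exchange $P\mapsto P'$ as a regular automorphism. The cleanest route is to identify it with the action of an element of the ambient orthogonal/symplectic group: the quadric $\GG\subset\P^5$ carries a nondegenerate quadratic form $q$ (the Pl\"ucker quadric), and the two families of planar sections corresponding to the two rulings of a smooth $Q$ are interchanged by an element of $O(6)\setminus SO(6)$, i.e. by a reflection in the orthogonal group preserving $\GG$. Such an element acts linearly on $\P^5$, hence on $\GG(2,5)$, and it preserves $\GG$ and thus the orthogonal Grassmannian $\mathbb{OG}$; consequently it lifts to the blow-up $Bl_{\mathbb{OG}}\GG(2,5)\cong\mathbf{Hilb}$ by the universal property of blowing up along an invariant center. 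This gives a globally defined automorphism $\tau$ of $\mathbf{Hilb}$ with $\tau^2=\mathrm{id}$, which is exactly the $\ZZ/2$-action sought.

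Finally I would verify that $\tau$ is \emph{nontrivial}. This follows because on the dense open locus of smooth quadrics the two conics $C_1,C_2$ are genuinely distinct (a smooth quadric surface has two distinct rulings), so $\tau$ moves the corresponding points of $\mathbf{Hilb}$; equivalently, the two components of $\mathbb{OG}$ recorded in the preceding Corollary are swapped by $\tau$, and since $\tau$ interchanges the two exceptional divisors $E_{1,1}^{+}$ it cannot be the identity on $\Pic(\mathbf{Hilb})$.

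The hardest part will be passing from the set-theoretic ruling-swap, defined only over smooth quadrics, to a morphism defined on all of $\mathbf{Hilb}$, including the boundary where $Q$ degenerates and the two conics collide or the curve becomes non-reduced. Realizing $\tau$ instead through the linear orthogonal-group element acting on $\P^5$ is precisely what circumvents this difficulty, since a linear action is automatically everywhere regular and its restriction to the smooth-quadric locus recovers the ruling-swap by construction; the remaining work is the bookkeeping check that this linear element stabilizes both $\GG$ and $\mathbb{OG}$ so that it descends to $\mathbf{Hilb}$.
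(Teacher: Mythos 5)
Your overall skeleton matches the paper's (define the involution on $\GG(2,5)$, check it preserves the blow-up centre $\mathbb{OG}$, lift it to $Bl_{\mathbb{OG}}\GG(2,5)\cong\mathbf{Hilb}$), and you correctly isolate the real difficulty, namely upgrading the set-theoretic ruling swap to a morphism. But the step you propose to resolve it fails: the ruling swap is \emph{not} induced by any element of $O(6)\setminus SO(6)$ acting linearly on $\P^5$. A fixed $g\in PGL_6$ preserving the Pl\"ucker quadric restricts to an automorphism of $\GG(1,3)$, hence is induced either by some $h\in PGL_4$ or by such an $h$ composed with the duality $\P^3\leftrightarrow(\P^3)^\vee$; in the first case $g(F_1(Q))=F_1(h(Q))$ and in the second $g(F_1(Q))=F_1(h(Q)^\vee)$. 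Requiring $g$ to stabilize $F_1(Q)$ (while swapping its two components) for \emph{every} smooth quadric $Q$ would force $h(Q)=Q$, respectively $h(Q)^\vee=Q$, identically on the space of quadrics, which is impossible (the first forces $h=\mathrm{id}$, so nothing is swapped; the second equates a degree-three map of $\P^9$ with the identity). So the reflection you invoke does not restrict to the ruling swap on the smooth locus, and the main step of your argument collapses.

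The correct global realization, implicit in the paper's proof, is the polarity $P\mapsto P^{\perp}$ on $\GG(2,5)$ with respect to the Pl\"ucker quadratic form $q$: two lines in $\P^3$ meet if and only if their Pl\"ucker points are $q$-orthogonal, and since every line of one ruling of a smooth $Q$ meets every line of the other, the plane spanned by one ruling-conic is exactly the $q$-orthogonal complement of the plane spanned by the other. Thus $P\mapsto P^{\perp}$ is a regular involution of $\GG(2,5)$ restricting to the ruling swap on a dense open set; it fixes $\mathbb{OG}$ \emph{pointwise} (a plane contained in the quadric is totally isotropic, so $P=P^{\perp}$), hence preserves the centre and lifts to the blow-up. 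This also shows your nontriviality argument is based on a false premise: the two components of $\mathbb{OG}$, and therefore the exceptional divisors $E_2^{+}$ and $E_{1,1}^{+}$, are \emph{not} interchanged --- the paper states right after the lemma that the action is the identity on them. Nontriviality should instead be argued, as you also note in passing, from the fact that $P\neq P^{\perp}$ for the plane spanned by a ruling of a smooth quadric.
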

\begin{proof} Let $Q\subset \P^3$ be a smooth quadric hypersurface. The Fano variety of lines $F_1(Q)$ is the zero locus of a section of the following bundle, 
\[
\begin{tikzpicture}[xscale=1.5,yscale=-1.4]
  \node (A0_0) at (0, 0) {$\Sym^2(S^{*})$};
  \node (A1_0) at (0, 1) {$\mathbb{G}(1,3)$};
  \path (A0_0) edge [->]node [auto] {$\scriptstyle{\pi}$} (A1_0);
  \path (A1_0) edge [->,bend right=35]node [auto] {$\scriptstyle{Q_{|L}}$} (A0_0);
\end{tikzpicture}
\]where $S^*$ is the dual of the tautological bundle $S$ over $\mathbb{G}(1,3)$.
A smooth conic in $\P^5$ determines uniquely a $2$-plane, thus in the Pl\"{u}cker embedding $\GG(1,3)\subset \P^5$, we have that
\begin{itemize}
\item[-] $F_1(Q)$ determines two $2$-planes if $\rank(Q)$ is either $2$ or $4$,
\item[-] $F_1(Q)$ determines a single $2$-plane if $\rank(Q)$ is either $1$ or $3$.
\end{itemize}
Exchanging such planes gives rise to a $\ZZ/2$-action on 
$\GG(2,5)$, the Grassmannian of $2$-planes in $\P^5$. The second condition above, says that such a 
$\ZZ/2$-action on $\GG(2,5)$ preserves the orthogonal Grassmannian $\mathbb{OG}$, hence inducing a $\ZZ/2$-action on the blow-up $\mathbf{Hilb}^{2x+1}(\GG(1,3))$.
\end{proof}

Observe that there is an $SL_4(\mathbb{C})$-action on $\mathbf{Hilb}$ induced by the action of $SL_4$ on $\P^3$. This action stratifies $\mathbf{Hilb}$ into $SL_4$-orbits compatible with the exceptional divisors $E_2^+,E_{1,1}^+$.
Notice that $\ZZ/2$ acts trivially on $SL_4$-orbits of codimension $2$. In codimension $1$, we have that $\ZZ/2$ acts as the identity on the exceptional divisors $E_2^{+}$ and $E_{1,1}^{+}$. Consider the quotient  
$$\mathcal{Q}(3)^{+}:=\mathbf{Hilb}/(\ZZ/2)$$ 
where the $\ZZ/2$-action is defined in the previous Lemma \ref{ACT}.The main result of this section is the following.

\begin{thm}\label{flip_th}
There is a flip $f:\mathcal{Q}(3)\dasharrow \mathcal{Q}(3)^+ $ over the Chow variety $\mathbf{Chow}^{2x+1}(\GG(1,3))$, and the flipping locus in $\mathcal{Q}(3)$ is the intersection the the divisors $E_1\cap E_3$.
\end{thm}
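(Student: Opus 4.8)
The existence and uniqueness of the flip are already guaranteed by Theorem \ref{MCD_main}: the movable cone of $\mathcal{Q}(3)$ contains, beyond its nef cone, a single further maximal Mori chamber, so there is exactly one nontrivial small $\mathbb{Q}$-factorial modification of $\mathcal{Q}(3)$. The task is therefore to identify this modification with $\mathcal{Q}(3)^+=\mathbf{Hilb}/(\ZZ/2)$, to recognize the Chow variety as the base of the resulting diagram, and to locate the flipping locus. The plan is to produce compatible small contractions from the two models onto $\mathbf{Chow}^{2x+1}(\mathbb{G}(1,3))$ and then to determine the contracted locus by a fiber analysis.

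First I would construct the contraction $\psi\colon\mathcal{Q}(3)\to\mathbf{Chow}^{2x+1}(\mathbb{G}(1,3))$ attached to a divisor spanning the unique interior wall of $\Mov(\mathcal{Q}(3))$. A complete quadric records, in addition to $q$, the induced forms $\wedge^{2}q$ and $\wedge^{3}q$, and these determine, via the modular description of \cite[Section 5.2]{Ce15}, a $1$-cycle in the Pl\"ucker quadric $\mathbb{G}(1,3)$; sending a complete quadric to this cycle defines $\psi$, the point being that the blow-up presentation of Construction \ref{ccq} is precisely what extends the rational cycle map across the degenerate locus. I would then verify that $\psi$ has relative Picard number one and contracts exactly the curve classes spanning that wall, so that it realizes one half of the flip.

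Next I would treat the $\mathcal{Q}(3)^+$ side. The Hilbert--Chow morphism $\mathbf{Hilb}\to\mathbf{Chow}^{2x+1}(\mathbb{G}(1,3))$ is equivariant for the involution of Lemma \ref{ACT}, and since that involution restricts to the identity on the codimension-one boundary (as recorded in the paragraph preceding the theorem) it passes to the quotient, giving a small contraction $\psi^{+}\colon\mathcal{Q}(3)^+\to\mathbf{Chow}^{2x+1}(\mathbb{G}(1,3))$. Both $\psi$ and $\psi^{+}$ are isomorphisms over the open locus of smooth quadrics, hence isomorphisms in codimension one; since crossing an interior wall of the movable cone of a Mori dream space produces a flip, the wall-crossing $\mathcal{Q}(3)\dasharrow\mathcal{Q}(3)^+$ is the flip over $\mathbf{Chow}^{2x+1}(\mathbb{G}(1,3))$, and by the uniqueness above it is the flip of the statement. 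The precise matching of the two modular descriptions I would import from \cite[Section 5.2]{Ce15}.

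It remains to identify the flipping locus with $E_1\cap E_3$, and this is where the principal difficulty lies. Over a rank-one quadric, that is a double plane, the lines degenerate to an entire plane of lines inside $\mathbb{G}(1,3)$, so the image cycle in $\mathbf{Chow}^{2x+1}(\mathbb{G}(1,3))$ stabilizes while the complete-quadric data continues to move; the positive-dimensional fibers of $\psi$ thereby produced sweep out the intersection of the exceptional divisor $E_1$ over the Veronese with the strict transform $E_3$ of the discriminant identified in Proposition \ref{theff}. The hard point is to show that $E_1\cap E_3$ is the \emph{entire} locus contracted by $\psi$: this demands a fiber-dimension computation along the full rank stratification, establishing in particular that the intermediate exceptional divisor $E_2$ over rank-two quadrics is \emph{not} contracted and that a generic point of the discriminant gives an isomorphism. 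I expect this to follow by feeding the boundary relations of Proposition \ref{theff} and the spherical orbit description of Lemma \ref{cb} into the fiberwise analysis, matched on the flipped side against the exceptional divisor of the blow-up $\mathbf{Hilb}=\Bl_{\mathbb{OG}}\mathbb{G}(2,5)$ along the orthogonal Grassmannian.
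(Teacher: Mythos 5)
Your route is genuinely different from the paper's. The paper never constructs the contractions to the Chow variety directly on $\mathcal{Q}(3)$ itself: instead it passes to $\mathbb{Z}/2$-covers, showing that $\overline{\mathcal{M}}_{0,0}(\GG(1,3),2)/(\ZZ/2)\cong\mathcal{Q}(3)$ (Lemma \ref{2COVER} and its corollary) while $\mathcal{Q}(3)^{+}=\mathbf{Hilb}^{2x+1}(\GG(1,3))/(\ZZ/2)$ by definition, and then imports from \cite{CC10} the already-established flip between the Kontsevich space and the Hilbert scheme over $\mathbf{Chow}^{2x+1}(\GG(1,3))$, which descends to the quotients because the maps in that flip diagram are $\ZZ/2$-equivariant by \cite[Section 5.2]{Ce15}. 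What the paper's approach buys is that all the hard geometry (smallness of the two contractions, identification of the flipped loci) is inherited from the known Hilbert-versus-Kontsevich comparison upstairs; what your approach would buy, if completed, is a self-contained argument on $\mathcal{Q}(3)$ that ties the flip directly to the wall of $\Mov(\mathcal{Q}(3))$ coming from Theorem \ref{MCD_main}, without invoking the covers at all.

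As written, however, your proposal has a real gap at its load-bearing step. The assertion that $\psi$ and $\psi^{+}$ are ``isomorphisms over the open locus of smooth quadrics, hence isomorphisms in codimension one'' does not follow: the complement of the locus of smooth quadrics contains the divisors $E_1,E_2,E_3$, so being an isomorphism there says nothing about whether a divisor is contracted. Smallness of $\psi$ is exactly the statement that no $E_i$ (and no color) is contracted to lower dimension, and this is the same fiber-dimension analysis over the rank stratification that you defer to the end with ``I expect this to follow.'' Until that computation is actually carried out --- showing that $E_2$ and the generic points of $E_1$ and $E_3$ survive, and that only $E_1\cap E_3$ acquires positive-dimensional fibers --- you have established neither that the birational map to $\mathcal{Q}(3)^{+}$ is small nor that the flipping locus is $E_1\cap E_3$; the uniqueness argument from Theorem \ref{MCD_main} cannot be invoked before smallness is known. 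You also never verify that $\mathcal{Q}(3)^{+}=\mathbf{Hilb}/(\ZZ/2)$ is $\mathbb{Q}$-factorial and genuinely not isomorphic to $\mathcal{Q}(3)$, both of which are needed to conclude it is the flip rather than the identity modification. The paper's detour through the $\ZZ/2$-covers is precisely how it avoids redoing this analysis, by borrowing it from \cite{CC10}.
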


The previous result can summarized in the following diagram
  \[
  \begin{tikzpicture}[xscale=2.8,yscale=-1.4]
    \node (A0_1) at (1, 0) {$\mathcal{Q}(3)$};
    \node (A0_3) at (3, 0) {$\mathcal{Q}(3)^{+}$};
    \node (A1_0) at (0, 1) {$\mathbb{P}^9$};
    \node (A1_2) at (2, 1) {$\mathcal{C}/(\mathbb{Z}/2)$};
    \node (A1_4) at (4, 1) {$\mathbb{G}(2,5)/(\mathbb{Z}/2)$};
    \path (A0_3) edge [->]node [auto] {$\scriptstyle{\phi_2}$} (A1_2);
    \path (A0_1) edge [->,swap]node [auto] {$\scriptstyle{\phi_1}$} (A1_2);
    \path (A0_1) edge [->,dashed]node [auto] {$\scriptstyle{flip}$} (A0_3);
    \path (A0_3) edge [->]node [auto] {$\scriptstyle{}$} (A1_4);
    \path (A0_1) edge [->]node [auto] {$\scriptstyle{}$} (A1_0);
  \end{tikzpicture}
  \]
where $\phi_1$ and $\phi_2$ are small contractions and the other maps, except for the flip, are all divisorial contractions described in previous sections. 

In order to show that $\mathcal{Q}(3)$ and $\mathcal{Q}(3)^+$ are related through a flip, we lift the maps so we avoid the action of $\mathbb{Z}/2$. In this scenario, we need to describe the flip for the Hilbert scheme $\mathbf{Hilb}^{2x+1}(\mathbb{G}(1,3))$. In doing so, we are lead to consider the Kontsevich moduli space $\overline{\mathcal{M}}_{0,0}(\GG,2)$ of degree two stable maps into the Grassmannian $\GG=\GG(1,3)$.

\begin{Lemma}\label{2COVER}
There is a nontrivial globally defined $\ZZ/2$-action on the Kontsevich space $\overline{\mathcal{M}}_{0,0}(\GG(1,3),2)$. 
\end{Lemma}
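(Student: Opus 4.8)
The plan is to construct the involution by the same mechanism used in Lemma \ref{ACT}, transported from the Hilbert scheme to the moduli of stable maps. A degree two genus zero stable map $f\colon C\to\GG(1,3)$ has as image a conic in the Klein quadric $\GG(1,3)\subset\P^5$, that is, a one parameter family of lines in $\P^3$; when $f$ has smooth domain these lines sweep out a quadric surface $Q_f\subset\P^3$, and a ruling of a smooth quadric $Q\subset\P^3$ recovers exactly such an $f$. The first step is therefore to make the assignment $f\mapsto Q_f$ algebraic and global. Pulling back the tautological subbundle $S\subset\O_{\GG(1,3)}^{\oplus4}$ along $f$ produces a rank two bundle $f^{*}S$ on $C$ together with an inclusion $f^{*}S\hookrightarrow\O_C^{\oplus4}$; the associated ruled surface $\P(f^{*}S)\to C$ carries a natural morphism to $\P^3$ whose image is $Q_f$. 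Carrying this out over the universal stable map yields a morphism
\[
\beta\colon \overline{\mathcal{M}}_{0,0}(\GG(1,3),2)\longrightarrow \P^9=|\O_{\P^3}(2)|,\qquad [f]\longmapsto [Q_f].
\]

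Second, I would restrict to the open locus $\mathcal{U}$ of maps lying over smooth quadrics. By Lemma \ref{ACT} the Fano scheme $F_1(Q)$ of a smooth quadric is a disjoint union of two conics, so $\beta$ is finite of degree two and étale over the smooth quadric locus, and its deck transformation is precisely the interchange of the two rulings. This already defines an involution $\iota$ on $\mathcal{U}$, and it is nontrivial: the two rulings of a general smooth quadric are distinct conics in $\GG(1,3)$, hence distinct automorphism free points of the Kontsevich space, so $\iota$ is not the identity.

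The remaining, and main, step is to extend $\iota$ to a regular involution of the whole Kontsevich space. Here I would use that $\GG(1,3)$ is homogeneous, hence convex, so $\overline{\mathcal{M}}_{0,0}(\GG(1,3),2)$ is a smooth Deligne--Mumford stack, in particular normal and projective; promoting the a priori only rational involution $\iota$ to a morphism then amounts to controlling its behaviour along the (divisorial) degenerate strata, using the explicit geometry of the ruled surfaces $\P(f^{*}S)$. To organize this I would exploit the $SL_4(\C)$ action on $\overline{\mathcal{M}}_{0,0}(\GG(1,3),2)$ induced from $\P^3$: it stratifies the space into finitely many orbit types compatible with $\beta$ (indexed by the rank of $Q_f$ and by the degeneration type of the domain), and $\iota$ permutes these strata, so that one need only verify stratum by stratum that the ruling swap is single valued and involutive. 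As a cross check, the regular $\ZZ/2$ action on $\mathbf{Hilb}\cong Bl_{\mathbb{OG}}\GG(2,5)$ furnished by Lemma \ref{ACT} agrees with $\iota$ on the common open set $\mathcal{U}$ of honest smooth conics, so transporting that action across the natural birational correspondence between the two compactifications gives an alternative route to the extension.

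The hard part will be precisely this boundary analysis: checking that the fiberwise ruling swap, manifestly regular over smooth quadrics, remains a morphism and squares to the identity along the degenerate strata, the delicate cases being the rank three cone locus, over which the two rulings collide and which must become the fixed locus of $\iota$, and the boundary parametrizing reducible domains and double covers of a line. Normality of the Kontsevich space together with $SL_4$ equivariance is what reduces this to a finite check rather than an open ended verification.
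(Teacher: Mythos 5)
The statement to be proved is precisely that the ruling--swap is \emph{globally defined}, and that is the one step your proposal does not actually carry out: you construct the involution only on the open locus $\mathcal{U}$ over smooth quadrics and then defer the extension to a stratum-by-stratum ``hard part.'' The strategy you propose for that extension is not guaranteed to succeed as stated. Your morphism $\beta\colon\overline{\mathcal{M}}_{0,0}(\GG(1,3),2)\to\P^9$ forgets too much: over quadrics of rank $\le 2$ its fibers are positive-dimensional, so $\beta$ is not finite and there is no deck transformation to extend. Normality of the source plus $SL_4$-equivariance does not by itself promote a birational involution to a morphism, and transporting the regular $\ZZ/2$-action from $\mathbf{Hilb}\cong Bl_{\mathbb{OG}}\GG(2,5)$ across a birational correspondence (the two compactifications differ by a flip) is likewise not automatic.

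The paper's proof closes exactly this gap by choosing a better target: it sends a stable map $(C,C^{*})$ to the \emph{complete} quadric $\bigl(\bigcup_{L\in C}L,\,C^{*}\bigr)\in\mathcal{Q}(3)$, i.e.\ it records the swept-out surface together with its marking. This map is $2$-to-$1$ over the smooth locus and over the rank-two divisor, and $1$-to-$1$ elsewhere; being a proper morphism with finite fibers between normal projective varieties it is finite of degree two, so the source is the normalization of $\mathcal{Q}(3)$ in a degree-two field extension and the Galois (deck) involution acts regularly on all of $\overline{\mathcal{M}}_{0,0}(\GG(1,3),2)$ with no boundary analysis needed. If you want to repair your argument, replace $\P^9$ by $\mathcal{Q}(3)$ (or otherwise arrange your $\beta$ to be finite of degree two); the rest of your discussion of the rulings then goes through as the identification of the deck transformation on the open locus.
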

\begin{proof}
We have a generic $2$ to $1$ morphism from the Kontsevich moduli space $\overline{\mathcal{M}}=\overline{\mathcal{M}}_{0,0}(\GG(1,3),2)=\{(C,C^*)\}$ to the space $\mathcal{Q}(3)$ of complete quadric surfaces defined as follows 
$$(C,C^*)\mapsto \left(\bigcup_{L\in C}L,C^*\right)$$
where the notation $(S,C^*)$ means a surface $S$, and a curve $C^*$ as its marking. This map is $2$ to $1$ over the open subset parametrizing smooth quadric surfaces as well as over the divisor of complete quadrics of rank two. Indeed, if $\bigcup_{L\in C} L$ sweeps out a smooth quadric $S$, then $L$ is a ruling of $S$. The other ruling induces another conic $C'$ which gets mapped to $S$. The situation is similar over the locus of complete quadrics of rank two. Note that this map is $1$ to $1$ outside two such regions. We now define the $\ZZ/2$-action on $\overline{\mathcal{M}}$ by identifying the two curves $C$ and $C'$.
\end{proof}

\begin{Corollary}
The quotient of $\overline{\mathcal{M}}_{0,0}(\GG,2)$ by the $\ZZ/2$-action is isomorphic to  $\mathcal{Q}(3)$. In particular, the quotient is smooth.
\end{Corollary}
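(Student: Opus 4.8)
The plan is to exhibit the quotient map as an explicit isomorphism onto $\mathcal{Q}(3)$, combining the generically $2:1$ morphism of Lemma \ref{2COVER} with Zariski's Main Theorem. Write $\overline{\mathcal{M}} = \overline{\mathcal{M}}_{0,0}(\GG(1,3),2)$ and let $\pi\colon\overline{\mathcal{M}}\to\mathcal{Q}(3)$ be the morphism $(C,C^{*})\mapsto(\bigcup_{L\in C}L,C^{*})$ constructed there. By the very definition of the $\ZZ/2$-action, which interchanges a conic $C$ with the conic $C'$ cut out by the opposite ruling, both points of a generic fibre of $\pi$ form a single $\ZZ/2$-orbit and are sent to the same complete quadric. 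Hence $\pi$ is $\ZZ/2$-invariant and factors as $\pi = \bar\pi\circ q$, where $q\colon\overline{\mathcal{M}}\to\overline{\mathcal{M}}/(\ZZ/2)$ is the quotient and $\bar\pi\colon\overline{\mathcal{M}}/(\ZZ/2)\to\mathcal{Q}(3)$ is the induced morphism. It then suffices to prove that $\bar\pi$ is an isomorphism.

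First I would check that $\bar\pi$ is finite and birational. Since both spaces are projective of dimension $9$ and $\pi$ has finite fibres (it is at most $2:1$ on closed points by Lemma \ref{2COVER}), the morphism $\pi$ is proper and quasi-finite, hence finite; as $q$ is finite and surjective, $\bar\pi$ is a proper, quasi-finite morphism of projective varieties, and therefore finite. For the degree, the factorization $\pi=\bar\pi\circ q$ gives $2=\deg(\pi)=\deg(\bar\pi)\cdot\deg(q)=\deg(\bar\pi)\cdot 2$, so $\deg(\bar\pi)=1$ and $\bar\pi$ is birational. Concretely, over the locus of smooth quadrics and over the rank-two locus the two preimages form one $\ZZ/2$-orbit and collapse to a single point of the quotient, while over the remaining rank $1$ and rank $3$ locus the action fixes the unique preimage; thus $\bar\pi$ is in fact bijective on closed points.

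Now I would invoke normality. The Kontsevich space $\overline{\mathcal{M}}$ is irreducible, so its quotient $\overline{\mathcal{M}}/(\ZZ/2)$ is an integral projective variety, while $\mathcal{Q}(3)$ is smooth, hence normal. A finite birational morphism from an integral variety onto a normal variety is an isomorphism: pushing forward structure sheaves, $\bar\pi_{*}\mathcal{O}_{\overline{\mathcal{M}}/(\ZZ/2)}$ is a finite $\mathcal{O}_{\mathcal{Q}(3)}$-algebra contained in the common function field, and normality of $\mathcal{Q}(3)$ forces it to equal $\mathcal{O}_{\mathcal{Q}(3)}$. Equivalently this is Zariski's Main Theorem applied to the birational, quasi-finite morphism $\bar\pi$ onto the normal target $\mathcal{Q}(3)$. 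This yields $\overline{\mathcal{M}}/(\ZZ/2)\cong\mathcal{Q}(3)$, and since $\mathcal{Q}(3)$ is smooth the quotient is smooth, proving the last assertion.

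The main obstacle is the fibre analysis of $\pi$ over the degenerate strata, that is, verifying that the $\ZZ/2$-action really does account for all of the $2:1$ behaviour and fixes exactly the preimages over the rank $1$ and rank $3$ loci, so that $\bar\pi$ is genuinely bijective there; this requires a careful comparison of the ruling data on degenerate quadrics with the boundary of the Kontsevich space, together with $\ZZ/2$-invariance of $\pi$ on that boundary. Once finiteness, birationality, and the normality of $\mathcal{Q}(3)$ are in place, the conclusion follows at once from Zariski's Main Theorem.
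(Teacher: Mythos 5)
Your proposal is correct and follows essentially the same route as the paper: the authors likewise observe that the quotient and $\mathcal{Q}(3)$ are birational with a bijection between them and then invoke Zariski's Main Theorem, relying on the smoothness (hence normality) of $\mathcal{Q}(3)$. Your write-up simply makes explicit the factorization $\pi=\bar\pi\circ q$, the degree count, and the fibre analysis over the degenerate strata that the paper leaves implicit.
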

\begin{proof}
Let $Z$ denote the quotient of $\overline{\mathcal{M}}$ by the $\ZZ/2$-action defined above. Observe that $X_3$ and $Z$ are birational and there is a bijection between them. Zariski's Main Theorem implies now the corollary.
\end{proof} 

It follows from \cite{CC10} that $\overline{\mathcal{M}}_{0,0}(\GG,2)$ and $\mathbf{Hilb}^{2x+1}(\mathbb{G}(1,3))$ are related through a flip over the Chow variety. Furthermore, the maps in such a flip diagram between these two spaces are $\mathbb{Z}/2$-equivariant \cite[Section 5.2]{Ce15}. The result in Theorem \ref{flip_th} now follows from the previous lemmas.

Following the construction of the space $\mathcal{Q}(3)^+$ above, we conjecture a modular interpretation for the flip of the space of complete collineations $\mathcal{X}(3)$. Let $A\in \mathcal{X}(3)$ be a generic complete collineation of $\P^3$. Then, we associate to it a hypersurface of $X=\P^3\times \P^3$ of bidegree $(1,1)$, denoted by $Y_A$; this is an element of the complete linear system $|\mathcal{O}_X(1,1)|\cong \P^{15}$. The singular locus of $Y_A$, when non-empty, is a product of projective spaces. Hence, the space $\mathcal{X}(3)$ parametrizes hypersurfaces $Y_A$ with a marking: complete collineation of the type $Y_Q$, where $Q\in \mathcal{X}(i)$, for some $i<3$. The hypersurface $Y_A$ is smooth for generic $A\in \mathcal{X}(3)$.

On another hand, let $F(Y_A)\subset \mathbb{G}(1,3)\times \mathbb{G}(1,3)$ be the Fano scheme of  ruled surfaces, isomorphic to $\P^1\times \P^1$, embedded in $Y_A$. This is the zero locus of a global section induced by $Y_A$,

\[
\begin{tikzpicture}[xscale=1.5,yscale=-1.4]
  \node (A0_0) at (0, 0) {$S^{*}\times S^{*}$};
  \node (A1_0) at (0, 1) {$\mathbb{G}(1,3)\times \mathbb{G}(1,3)$};
  \path (A0_0) edge [->]node [auto] {$\scriptstyle{\pi}$} (A1_0);
  \path (A1_0) edge [->,bend right=35]node [auto] {$\scriptstyle{Y_A}$} (A0_0);
\end{tikzpicture}
\]
where $S^*$ is the dual of the tautological bundle over $\mathbb{G}(1,3)$.

We claim that the subscheme $F(Y_A)$ fully determines the hypersurface $Y_A$. Hence, the Hilbert scheme $ \mathbf{Hilb}_{1,1}$ parametrizing subschemes $F(Y_A)$ inside the product $\mathbb{G}(1,3)\times \mathbb{G}(1,3)$ is birational to $\mathcal{X}(3)$. In other words, $\mathbf{Hilb}_{1,1}\cong \mathcal{X}(3)$. We conjecture that this rational map is a flip over the Chow variety that parametrizes the cycle classes of $F(Y_A)$.

\bibliographystyle{amsalpha}
\bibliography{Biblio}
\end{document}